\DeclareMathOperator{\diam}{diam}
\newcommand{\MSC}{\noindent \textit{2010 MSC:} }
\newcommand{\email}{\noindent \textit{email:} }
\newcommand{\keywords}{\noindent \textit{Keywords:} }
\newtheorem{theorem}{Theorem}[section]
\newtheorem{corollary}{Corollary}
\theoremstyle{definition}
\newtheorem{definition}[theorem]{Definition}
\newtheorem{problem}[theorem]{Problem}
\newtheorem{example}{Example}
\title{\textbf{BIFURCATIONS, ROBUSTNESS AND SHAPE OF ATTRACTORS OF DISCRETE DYNAMICAL SYSTEMS}}
\author{H\'ECTOR BARGE, ANTONIO GIRALDO\\ and\\ JOS\'E M.R. SANJURJO}
\date{}
\begin{document}
\maketitle



\begin{abstract}
We study in this paper global properties, mainly of topological
nature, of attractors of discrete dynamical systems. We consider the Andronov-Hopf bifurcation for homeomorphisms of the plane and establish some  robustness properties for attractors of such homeomorphisms. We also give relations between attractors of flows and quasi-attractors of homeomorphisms in $\mathbb{R}^{n}$. Finally, we give a result on the shape (in the sense of Borsuk) of invariant sets of IFS's on the plane, and make some remarks about the recent theory of Conley attractors for IFS.
\end{abstract}

\MSC{37C70,37G35,37B25,54C56,55P55,28A80}
\newline
\keywords{Dynamical systems, bifurcation, attractor, robustness, shape, iterated function system.}

\bigskip

The aim of this paper is to study global properties, mainly of topological
nature, of attractors of discrete dynamical systems and the persistence or change of these properties under perturbation or bifurcation of the system.

In the case of attractors of flows, \v{C}ech homology and Borsuk's homotopy theory (or shape theory) have been classically used in the study of the topology of attractors, see \cite{gus, san94, kar}. This study depends, in an essential way, on the homotopies provided by the flow.

However, in the discrete case, the absence of such homotopies makes the study considerably more complicated.
A way to circumvent this difficulty, at least for systems in %
 the plane, is to use a classical theorem by K. Borsuk \cite{bor}  where the shape classification of compacta is given in elementary topological terms.
For this reason, virtually no knowledge of shape theory is required in this
paper as long as we take for granted Borsuk's theorem.

In the first section, we recall some notions and results on dynamical systems, fractal geometry and topology (in particular, shape theory) needed throughout the paper.

In Section 2 we study the Andronov-Hopf bifurcation for homeomorphisms of the plane and robustness properties for attractors of such homeomorphisms. The higher dimensional case and other kind of bifurcations will be treated in a forthcoming paper.

Section 3 is dedicated to study relations between attractors of flows and attractors of homeomorphisms in $\mathbb{R}^{n}$. Here, Conley's notion of quasi-attractor plays a substantial role.

Finally, in Section 4, we study the \v{C}ech homology and shape of fractals in the plane and make some remarks about the recent theory of Conley attractors for IFS.

\section{Preliminaries}

This paper lies in the intersection of three great areas: Dynamical systems, fractal geometry and topology (in particular, shape theory). In this section we recall some basic notions of results used in the paper. For more information we refer the reader to the references cited throughout the section.

We will make use of some classical concepts of dynamical systems which can be found in \cite{bhs,rob04}. In particular, we will be dealing with attractors of flows and homeomorphisms. A compact invariant set $K$ of a flow (resp. homeomorhism) is said to be an \emph{attractor} if it admits a \emph{trapping region}, i.e., a compact neighborhood $N$ such that $Nt\subset \mathring{N}$ for every $t>0$ (resp. $f(N)\subset \mathring{N}$), satisfying that
$$
K=\bigcap_{t\geq 0}N[t,+\infty)
\ \
(\text{resp. } K=\bigcap_{k=0}^{\infty } f^k (N)).$$


It is well-known that trapping regions are robust objects, in the sense that, if $N$ is a trapping region of a flow (resp. homeomorphism) it is also a trapping region for small perturbations of it \cite{con,ken,hur}.
Finally, we will consider the more general notion of a \emph{quasi-attractor}, which is a non-empty compact invariant set which is intersection of attractors.

The main notions and ideas from fractal geometry needed in the paper can be found in \cite{bar}.
Specifically, we will study iterated function systems defined on Euclidean spaces. An \emph{iterated function system}, or shortly IFS, is a family
$$\mathcal{F}=\{f_1,f_2,\ldots,f_k\}$$
of self-maps $f_i:\mathbb{R}^n\to\mathbb{R}^n$. We will assume that these self-maps are contractive homeomorphisms. Recall that a map $f:\mathbb{R}^n\to\mathbb{R}^n$ is \emph{contractive} if there exists $\lambda \in [0,1)$ such that
$d(f(x),f(y))\leq \lambda d(x,y)$.

If $\mathcal{F}$ is an IFS consisting of contractive maps it has a unique compact invariant set $K$, i.e.
\[
K=\bigcup_{i=1}^kf_i(K).
\]
Besides, $K$ attracts every non-empty compact subset of $\mathbb{R}^n$ in the Hausdorff metric, defined in the hyperspace
$
\mathcal{H}(\mathbb{R}^n)=\{K\subset\mathbb{R}^n\mid K\;\mbox{is non-empty and compact}\}
$
as
\[
d_H(A,B)=\inf\{\epsilon\geq 0\mid B\subset A_\epsilon,\;\mbox{and}\; A\subset B_\epsilon\}
\]
where
$A_\epsilon=\bigcup_{x\in A}\{y\in\mathbb{R}^n\mid d(x,y)\leq\epsilon\} $.
Hence, $K$ is known as the attractor of the IFS.

Finally, we will use some topological notions in the paper, in particular notions and results from shape theory, a form of homotopy theory introduced and studied by K. Borsuk \cite{bor}, which has been proved to be specially  convenient for the study of the global topological properties of the invariant spaces involved in dynamics. In particular, we will make use of
the following result which characterizes the shape of all plane continua in terms of the number of components of their complements.

\begin{theorem}[\cite{bor}] Two continua $K$ and $L$ contained in $\mathbb{R}^{2}$
have the same shape if and only if they disconnect $\mathbb{R}^{2}$ in the
same number (finite or infinite) of connected components.
In particular:
\begin{enumerate}[i)]
\item  A continuum has trivial shape (the shape of a point) if and only if it does not disconnect $\mathbb{R}^{2}$.
\item A continuum has the shape of a circle if and only if it
disconnects $\mathbb{R}^{2}$ into exactly two connected components.
\item Every other continuum has the
shape of a wedge of circles, finite, or infinite (Hawaiian earring).
\end{enumerate}
\end{theorem}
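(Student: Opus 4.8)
The plan is to prove both implications by combining Alexander duality with the characterization of the shape of a plane continuum as the pro-homotopy type of the inverse system of its closed neighborhoods. Throughout I identify $\mathbb{R}^{2}$ with $S^{2}\setminus\{\infty\}$; since $K$ is compact, the point $\infty$ lies in the unbounded complementary component, so $K$ disconnects $\mathbb{R}^{2}$ and $S^{2}$ into the same number $c$ (finite or infinite) of connected components.

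First I would isolate the cohomological invariant. By Alexander duality in $S^{2}$ one has $\check{H}^{1}(K;\mathbb{Z})\cong\tilde{H}_{0}(S^{2}\setminus K;\mathbb{Z})$, which is free abelian of rank $c-1$. Since \v{C}ech cohomology is a shape invariant and $K$ is a continuum (so its reduced $\check{H}^{0}$ vanishes), this already settles the easy implication: if $K$ and $L$ have the same shape, then $\operatorname{rank}\check{H}^{1}(K)=\operatorname{rank}\check{H}^{1}(L)$, so their complements have the same number of components. The same computation pins down the three model spaces, since a point has rank $0$, a circle rank $1$, a wedge of $n$ circles rank $n$, and the Hawaiian earring infinite rank; this is what will let the three enumerated cases be read off from $c$.

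For the converse — the realization step — I would exhibit, for each continuum $K$ whose complement has $c$ components, a cofinal inverse sequence $N_{1}\supset N_{2}\supset\cdots$ of compact connected neighborhoods of $K$, each a planar surface with boundary, i.e.\ a compact connected region bounded by finitely many disjoint simple closed curves. Such a region deformation retracts onto a wedge of circles, one for each bounded complementary hole it encloses. The construction proceeds by taking regular neighborhoods of $K$ in successively finer triangulations and using plane topology (the Jordan curve theorem and the structure of open subsets of the plane) to control which holes are enclosed. When $c$ is finite I would arrange that, for $i$ large, every $N_{i}$ encloses exactly the $c-1$ bounded components of $\mathbb{R}^{2}\setminus K$, so each $N_{i}$ is homotopy equivalent to a wedge of $c-1$ circles and each inclusion $N_{i+1}\hookrightarrow N_{i}$ is a homotopy equivalence; the inverse system is then pro-homotopy equivalent to the constant system and $K$ acquires the shape of a wedge of $c-1$ circles (a point if $c=1$, a circle if $c=2$). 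When $c$ is infinite, the $N_{i}$ are wedges of increasing finite numbers of circles whose bonding maps collapse the circles around holes not yet separated, and the resulting pro-object is pro-homotopy equivalent to the neighborhood system of the Hawaiian earring.

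The main obstacle I expect is precisely this geometric realization: producing neighborhoods that are genuine planar surfaces with boundary and whose homotopy type stabilizes in the correct combinatorial way, and, in the infinite case, matching the bonding maps to those of the Hawaiian earring. This is where the delicate plane topology enters — controlling how the boundary curves separate the bounded complementary components and verifying that the inclusion-induced maps on the wedges collapse exactly the right circles. By contrast, the cohomological direction via Alexander duality is essentially formal once shape-invariance of \v{C}ech cohomology is granted.
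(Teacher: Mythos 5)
First, a remark on the comparison you asked for: the paper contains no proof of this statement at all. It is Borsuk's classical classification of plane continua, quoted with a citation to \cite{bor}, and the introduction says explicitly that Borsuk's theorem is ``taken for granted'' throughout. So your attempt can only be measured against the theorem itself, not against an in-paper argument. With that understood, your easy direction is correct and essentially complete: passing to $S^{2}$ is harmless because $\infty$ joins the unbounded complementary component, Alexander duality gives $\check{H}^{1}(K;\mathbb{Z})\cong\tilde{H}_{0}(S^{2}\setminus K;\mathbb{Z})$ (components equal path components since the complement is open in $S^{2}$), and shape invariance of \v{C}ech cohomology then shows that continua of the same shape have complements with the same number of components; the same computation identifies the invariant for the model spaces in i)--iii).

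The genuine gap is in the converse, and although you have located it honestly, you have not closed it, and it is not a routine verification. Two concrete holes. (a) In the finite case you assert one can ``arrange that every $N_{i}$ encloses exactly the $c-1$ bounded components''; this needs an actual construction --- a regular neighborhood can have several holes inside a single bounded component $A_{j}$ of $\mathbb{R}^{2}\setminus K$, and one must merge them, e.g.\ by deleting a thin corridor along an arc in $A_{j}$ joining two holes (such an arc stays at positive distance from $K$, so the result is still a neighborhood), and similarly channel the holes lying in the unbounded component out to infinity. One must also justify that an inclusion of one disk-with-$(c-1)$-holes in another inducing an isomorphism on $H_{1}$ is a homotopy equivalence (both spaces are aspherical with free fundamental group, so this follows from the $\pi_{1}$-statement plus Whitehead, but it has to be said). (b) In the infinite case, the sentence ``the resulting pro-object is pro-homotopy equivalent to the neighborhood system of the Hawaiian earring'' \emph{is} the content of Borsuk's theorem in that case; you assert it rather than prove it. Note also that since $N_{i+1}\subset N_{i}$ means the complements grow, holes of $N_{i}$ sit inside holes of $N_{i+1}$, so your description of the bonding maps needs the one-hole-per-component normalization of (a) even to make sense; granting it, one gets the correct Hawaiian-earring pattern on pro-$H_{1}$ (holes of $N_{i}$ inject into holes of $N_{i+1}$, new circles are collapsed), but matching pro-$H_{1}$ does not by itself produce a pro-homotopy equivalence of towers of wedges of circles. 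You must control the inclusions up to pointed homotopy at the $\pi_{1}$ level --- free groups, basepoint and conjugacy ambiguities, and the plane-topology fact that a simple closed curve in a planar surface enclosing exactly one hole is isotopic to the standard loop around that hole --- before the tower of inclusions can be identified with the standard collapse tower of the Hawaiian earring. Until that is done, the hard half of your argument is a program rather than a proof.
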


We will denote by $\check{H}_*$ and $\check{H}^*$ the \v{C}ech homology and cohomology functors respectively. \v{C}ech homology (cohomology) theory has proven to be important in dynamical systems since, for the case of attractors, it agrees with the homology of its basin of attraction in the case of flows defined on manifolds (more generally ANR's) \cite{gus,san11}, and in the case of homeomorphisms in
manifolds, if we take coefficients in a field \cite{rps}.
In both cases it must be finitely generated. Moreover, \v Cech homology and cohomology are shape invariants.

We also recall that a compact subset $K$ of $\mathbb{R}^n$ is said to be \emph{cellular} if it has a neighborhood basis consisting of topological balls. We will make use of the fact that, if $K$ is a cellular subset of $\mathbb{R}^n$, then $\mathbb{R}^n\setminus K$ is homeomorphic to $\mathbb{R}^n\setminus \{p\}$, where $p\in\mathbb{R}^n $ \cite{dav}.
In particular $\mathbb{R}^n\setminus K$ is connected. Cellular sets are instances of continua with trivial shape.

For general information on algebraic topology we recommend the book by {Spa\-nier} \cite{spa}.
For a complete treatment of shape theory we refer the reader to \cite{bor,cop,dys,mar,mas}.
The use of shape in dynamics is illustrated by the papers
\cite{gmrs,gis,gus,has,kar,ros,rob99,sag}.

\section{Bifurcations and robustness of attractors of {ho\-meo\-mor\-phisms} of the plane}

In this section we prove some results related to the Andronov-Hopf
bifurcation theorem for diffeomorphisms of the plane. This result was proved
by Naimark \cite{nai}, Sacker \cite{sac} and Ruelle and Takens \cite{rut}. The name of Andronov-Hopf bifurcation is commonly used because
of the connection with the Andronov-Hopf bifurcation for differential
equations. The Andronov-Hopf bifurcation for a diffeomorphism occurs when a
pair of eigenvalues for a fixed point changes from absolute value less than
one to absolute value greater than one, i.e., the fixed point changes from
stable to unstable by a pair of eigenvalues crossing the unit circle. The
formulation of the theorem and its proof are rather complicated. In the case
of flows there are some topological versions of the theorem where the
hypotheses are simpler and the conclusions are weaker but significant, see \cite{san07}.
To reach these conclusions, a heavy use is made of the homotopies induced by
the flow. In the case of discrete dynamical systems induced by
homeomorphisms, we don't have such homotopies at our disposal. However, by
using Borsuk's theorem on the shape classification of plane continua \cite{bor}, we are able to prove a result which can be considered as a
topological Andronov-Hopf bifurcation theorem for homeomorphisms of the
plane. We also prove in this section a result concerning robustness of some
global properties of attractors of homeomorphisms of the plane (formulated in
the language of shape theory and \v{C}ech homology). This result was already
known for flows (see \cite{san95}).
However, as in the previous result, we cannot use the flow-induced homotopies which are
essential in the proof of the continuous case.
We use, instead, Borsuk's theorem again to give a discrete counterpart of that result.

\begin{theorem}
Suppose that $\mathbf{\Phi }=(\Phi _{\lambda })_{\lambda \in \mathbb{R}}:\mathbb{R}^{2}\times \mathbb{R\rightarrow R}^{2}$ is a continuous
one-parameter family of homeomorphisms of the plane such that
\begin{enumerate}[(a)]
\item the origin is a fixed point of $\Phi _{\lambda }$ for $\lambda $ near $0$,
\item the origin is an attractor for $\lambda =0$,
\item the origin is a repeller for $\lambda >0$.
\end{enumerate}

Then there exists a $\lambda _{0}>0$ such that for every $\lambda $ with
$0<\lambda \leq \lambda _{0}$ there exists an attractor $K_{\lambda }$ of
$\Phi _{\lambda }$ with $Sh(K_{\lambda })=Sh(\mathbb{S}^{1})$. In particular,
the \v{C}ech homology and cohomology of $K_{\lambda }$ agree with that of
the circle. Moreover, the attractors $K_{\lambda }$ surround the fixed point
$\{0\}$ and shrink to it when $\lambda \rightarrow 0$.
\end{theorem}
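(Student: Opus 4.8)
The plan is to realize $K_\lambda$ as the attractor carried by a positively invariant annulus lying between a fixed outer trapping disk and a small inner repelling disk, and then to identify its shape through continuity of \v{C}ech cohomology together with Borsuk's theorem. First I would fix the outer region: since the origin is an attractor of $\Phi_0$, choose a trapping region for it which is a topological closed disk $N$ (possible because a stable point attractor in the plane is cellular and hence admits a neighbourhood basis of trapping disks), so that $\bigcap_{k\ge 0}\Phi_0^k(N)=\{0\}$. By the robustness of trapping regions there is $\lambda_0>0$ with $\Phi_\lambda(N)\subset\mathring N$ for all $0\le\lambda\le\lambda_0$. For $0<\lambda\le\lambda_0$ the origin is a repeller of $\Phi_\lambda$, equivalently an attractor of $\Phi_\lambda^{-1}$, so it admits a trapping disk $D_\lambda\subset\mathring N$ for $\Phi_\lambda^{-1}$, i.e. $\Phi_\lambda^{-1}(D_\lambda)\subset\mathring D_\lambda$ and $\bigcap_{k\ge 0}\Phi_\lambda^{-k}(D_\lambda)=\{0\}$. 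Setting $A_\lambda=N\setminus\mathring D_\lambda$, a closed annulus, I would check that it is a trapping region: the inclusion $\Phi_\lambda(N)\subset\mathring N$ controls the outer boundary, while if $\Phi_\lambda(x)\in D_\lambda$ then $x\in\Phi_\lambda^{-1}(D_\lambda)\subset\mathring D_\lambda$, so no point of $A_\lambda$ enters $D_\lambda$; together these give $\Phi_\lambda(A_\lambda)\subset\mathring N\setminus D_\lambda=\mathring A_\lambda$. Hence $K_\lambda:=\bigcap_{k\ge0}\Phi_\lambda^k(A_\lambda)$ is an attractor of $\Phi_\lambda$ contained in $\mathring A_\lambda$, so in particular $0\notin K_\lambda$.

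Next I would establish the two topological properties feeding Borsuk's theorem. The sets $\Phi_\lambda^k(A_\lambda)$ form a nested decreasing sequence of compacta, each a homeomorphic image of the connected set $A_\lambda$, hence connected; if $K_\lambda$ were split by two disjoint open sets then, by compactness, some $\Phi_\lambda^k(A_\lambda)$ would lie in their union while meeting both, contradicting its connectedness. Thus $K_\lambda$ is a continuum. Moreover each $\Phi_\lambda^k(A_\lambda)$ is an annulus whose bounded complementary region contains $0$ and whose unbounded complementary region contains $\mathbb{R}^2\setminus N$, so each separates $0$ from $\infty$ in $\mathbb{S}^2$; a path joining $0$ to $\infty$ in $\mathbb{S}^2\setminus K_\lambda$ would be a compactum missing $\bigcap_k\Phi_\lambda^k(A_\lambda)$ and hence some single $\Phi_\lambda^k(A_\lambda)$, which is impossible. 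Therefore $K_\lambda$ separates $0$ from $\infty$.

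To pin down the shape I would compute $\check H^{*}(K_\lambda)$ using continuity of \v{C}ech cohomology: $\check H^{*}(K_\lambda)=\varinjlim_k \check H^{*}(\Phi_\lambda^k(A_\lambda))$, where each term is the cohomology of an annulus and every bonding map is induced by the inclusion $\Phi_\lambda^{k+1}(A_\lambda)\hookrightarrow\Phi_\lambda^k(A_\lambda)$, which is conjugate by $\Phi_\lambda^{k}$ to $\Phi_\lambda(A_\lambda)\hookrightarrow A_\lambda$. The crucial point, and the step I expect to be the main obstacle, is that this inclusion is a homotopy equivalence: $\Phi_\lambda(A_\lambda)$ is a sub-annulus of $A_\lambda$ whose inner hole contains $\mathring D_\lambda$ and whose outer region contains $\mathbb{R}^2\setminus N$, so its core circle generates $\pi_1(A_\lambda)$ and the induced map on $H^1$ is an isomorphism $\mathbb{Z}\to\mathbb{Z}$. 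Consequently $\check H^{1}(K_\lambda)\cong\mathbb{Z}$ and $\check H^{0}(K_\lambda)\cong\mathbb{Z}$, so by Alexander duality in $\mathbb{S}^2$ the complement $\mathbb{R}^2\setminus K_\lambda$ has exactly $1+\operatorname{rank}\check H^{1}(K_\lambda)=2$ components. Being a plane continuum disconnecting $\mathbb{R}^2$ into exactly two pieces, $K_\lambda$ satisfies $Sh(K_\lambda)=Sh(\mathbb{S}^1)$ by Borsuk's theorem, and its \v{C}ech homology and cohomology agree with those of the circle.

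Finally, for the shrinking assertion I would fix $\epsilon>0$ and a ball $B_\epsilon$ about $0$. Since $\bigcap_k\Phi_0^k(N)=\{0\}$, there is $M$ with $\Phi_0^{M}(N)\subset B_\epsilon$, and by joint continuity of the family the finite composition $\Phi_\lambda^{M}$ gives $\Phi_\lambda^{M}(N)\subset B_\epsilon$ for all sufficiently small $\lambda>0$; as $K_\lambda\subset\Phi_\lambda^{M}(A_\lambda)\subset\Phi_\lambda^{M}(N)$, this forces $K_\lambda\subset B_\epsilon$, so $K_\lambda\to\{0\}$ in the Hausdorff metric as $\lambda\to 0^{+}$, while $K_\lambda$ always surrounds $0$ by the separation established above.
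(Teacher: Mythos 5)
Your construction is genuinely different from the paper's and, in outline, it works. The paper never produces an annular trapping region: it takes an arbitrary trapping region $N$, manufactures nested topological disks $\Phi_\lambda^{nk}(D)$ using a power $\Phi_\lambda^{k}$, concludes that the full attractor $A_\lambda=\bigcap_{n}\Phi_\lambda^{n}(N)$ is cellular, and then defines $K_\lambda=A_\lambda\setminus\mathcal{R}_\lambda$, where $\mathcal{R}_\lambda$ is the basin of repulsion of the origin. The count of complementary components is then soft: $\mathbb{R}^{2}\setminus K_\lambda=(\mathbb{R}^{2}\setminus A_\lambda)\cup\mathcal{R}_\lambda$, the first set connected because complements of cellular sets are connected (Daverman), the second a bounded connected open set, so Borsuk's theorem gives $Sh(K_\lambda)=Sh(\mathbb{S}^{1})$ with no homology computation at all. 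You instead sandwich $K_\lambda$ in the annulus $A_\lambda=N\setminus\mathring{D}_\lambda$ and identify its cohomology via continuity of \v{C}ech cohomology together with the essentialness of the inclusion $\Phi_\lambda(A_\lambda)\hookrightarrow A_\lambda$; this is exactly the technique the paper uses for its robustness result (Theorem 2.2), transplanted to the bifurcation setting. Your route buys a concrete trapping annulus and a direct computation $\check{H}^{1}(K_\lambda)\cong\mathbb{Z}$ (so Alexander duality already yields the two complementary components before Borsuk is invoked); the paper's route buys freedom from any control on the topology of trapping regions and, in addition, identifies the basin of $K_\lambda$ as $\mathcal{A}_\lambda\setminus\{0\}$. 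Your homotopy-equivalence step is sound: $\Phi_\lambda^{-1}(D_\lambda)\subset\mathring{D}_\lambda$ forces $D_\lambda\subset\Phi_\lambda(\mathring{D}_\lambda)$, so the inner boundary circle of the sub-annulus bounds a disk containing the hole of $A_\lambda$, making the inclusion essential, and conjugation by $\Phi_\lambda^{k}$ propagates this along all the inclusions.

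The one step that does not hold as stated is the very first: ``a stable point attractor in the plane is cellular and hence admits a neighbourhood basis of trapping disks.'' Cellularity of $\{0\}$ gives small disk neighborhoods, but nothing makes those disks positively invariant; the non sequitur matters because the disks the paper itself produces, $\Phi_\lambda^{nk}(D)$, are trapping only for the power $\Phi_\lambda^{k}$, not for $\Phi_\lambda$ --- which is precisely why the paper settles for cellular sets instead of trapping disks. The claim is nevertheless true and repairable: choose a trapping region $N'$ of $\{0\}$ contained in a round ball $B$ inside the basin of attraction (arbitrarily small trapping regions exist since the sets $\Phi_0^{n}(N')$ are themselves trapping regions shrinking to $\{0\}$); interpolate a compact $2$-manifold $P$ with $\Phi_0(N')\subset\mathring{P}\subset P\subset\mathring{N'}$, which is then a trapping region; pass to the component $P_0$ containing $0$ (its image is connected and contains the fixed point, hence lies in $\mathring{P_0}$); and fill in the bounded complementary components of $P_0$. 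The filled set $\hat{P}_0$ is a disk by Schoenflies, it is still a trapping region because a homeomorphism of the plane carries the filled set of $P_0$ onto the filled set of $\Phi_0(P_0)\subset\mathring{\hat{P}_0}$, and it lies in the convex hull of $P_0$, hence in $B$ and in the basin; this last point is what guarantees $\bigcap_{k}\Phi_0^{k}(\hat{P}_0)=\{0\}$, an equality your final shrinking argument genuinely uses when you pick $M$ with $\Phi_0^{M}(N)\subset B_\epsilon$. The same repair yields the inner trapping disks $D_\lambda$ for $\Phi_\lambda^{-1}$ (there the equality $\bigcap_{k}\Phi_\lambda^{-k}(D_\lambda)=\{0\}$ is in fact never used in your argument; only $0\in\mathring{D}_\lambda$ and the trapping property are). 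With this lemma supplied, your proof is complete.
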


\begin{proof}
Since $\{0\}$ is an attractor for $\Phi _{0}$, there exists a trapping
region, i.e. a compact neighborhood $N$ of $\{0\}$ such that $\Phi
_{0}(N)\subset \mathring{N}$, satisfying that $\{0\}=\bigcap_{n=0}^{\infty } \Phi _{0}^n (N)$.
Select a closed disk $D$ centered at $\{0\}$
and contained in $\mathring{N}$ and a $k>0$ such that $\Phi
_{0}^{k}(N)\subset \mathring{D}$. By the continuity of the family $\Phi $
there is a $\lambda _{0}$ such that $N$ is also a trapping region for $\Phi
_{\lambda }$ and $\Phi _{\lambda }^{k}(N)\subset \mathring{D}$ (and hence
$\Phi _{\lambda }^{k}(D)\subset \mathring{D}$) for every $\lambda $ with
$\lambda \leq \lambda _{0}$. Then for every $\lambda \leq \lambda _{0}$ there
exists an attractor $A_{\lambda }$ of $\Phi _{\lambda }$ with $A_{\lambda
}=\bigcap _{n=0}^{\infty }\Phi _{\lambda }^{n}(N)$. Since $\Phi _{\lambda
}^{k}(N)\subset \mathring{D}$ we have that $\Phi _{\lambda }^{(n+1)k}(N)$ is
contained in $\Phi _{\lambda }^{nk}(\mathring{D})$ for every $n\geq 1$ and,
thus, $\Phi _{\lambda }^{(n+1)k}(D)$ is also contained in $\Phi _{\lambda
}^{nk}(\mathring{D})$ (which agrees with the interior of $\Phi _{\lambda
}^{nk}(D)$) and
$$
A_{\lambda }=\bigcap_{n=1}^{\infty } \Phi _{\lambda }^{(n+1)k}(N)\subset \bigcap_{n=1}^{\infty }  \Phi _{\lambda }^{nk}(D)\subset \bigcap_{n=1}^{\infty }  \Phi _{\lambda }^{nk}(N)=A_{\lambda }.
$$

Hence $A_{\lambda }$ is the intersection of the topological disks $\Phi
_{\lambda }^{nk}(D)$, and the fact that $\Phi _{\lambda }^{(n+1)k}(D)$ is
contained in the interior of $\Phi _{\lambda }^{kn}(D)$ for every $n$
implies that $A_{\lambda }$ is a cellular set. Although we will not need it,
we point out that an alternative description of $A_{\lambda }$ is
$$
A_{\lambda }=\{y\in \mathbb{R}^{2}|\text{ there exist sequences }x_{n}\in D,
\text{ }k_{n}\rightarrow \infty \text{ with }\Phi _{\lambda
}^{k_{n}}(x_{n})\rightarrow y\}.
$$

Then we have for every $\lambda \leq \lambda _{0}$ a cellular attractor
$A_{\lambda }\subset \mathring{D}$ of $\Phi _{\lambda }$ such that $D$ is
contained in its basin of attraction, which we denote by $\mathcal{A} _{\lambda }$.
We remark that $\{0\}$ is a fixed point of $\Phi _{\lambda }$
contained in $A_{\lambda }$. Moreover, since $\{0\}$ is a repeller for $\Phi
_{\lambda }$ we must also have that its basin of repulsion $\mathcal{R} _{\lambda }$ is contained in $A_{\lambda }$. Notice that $\mathcal{R} _{\lambda }$ is open and connected.
Now we define $K_{\lambda }=A_{\lambda }\setminus \mathcal{R}_{\lambda }$. Then $K_{\lambda }$ is a connected attractor of $\Phi _{\lambda }$ whose basin of attraction is
$\mathcal{A}_{\lambda }\setminus \{0\}$.
Since $\mathbb{R}^{2}\setminus K_{\lambda }=(\mathbb{R}^{2}\setminus A_{\lambda })\cup
\mathcal{R}_{\lambda }$ we have that $K_{\lambda }$ decomposes the plane
into two connected components and, by Borsuk's theorem on shape
classification of plane continua \cite{bor}, that $Sh(K_{\lambda })=Sh(\mathbb{S} ^{1})$.
Moreover $K_{\lambda }$ surrounds $\{0\}$ because the origin is in
$\mathcal{R}_{\lambda }$ which is the bounded component of $\mathbb{R} ^{2}\setminus K_{\lambda }$.
We remark that the disk $D$ could have been chosen
arbitrarily small and that $A_{\lambda }\subset D$ for $\lambda $
sufficiently small. Hence the attractors $A_{\lambda }$ (and consequently
also the attractors $K_{\lambda }$) converge to the origin.
\end{proof}

The following result states that attractors of homeomorphisms of the plane
are robust, in the sense that some of their global properties are preserved
under small perturbations of the homeomorphism.

\begin{theorem}
Suppose that $\mathbf{\Phi }=(\Phi _{\lambda })_{\lambda \in \mathbb{R}}: \mathbb{R}^{2}\times \mathbb{R\rightarrow R}^{2}$ is a continuous
one-parameter family of homeomorphisms and that $A$ is an attractor of $\Phi
_{0}$. Then there exists $\lambda _{0}>0$ such that for every $\lambda \leq
\lambda _{0}$ there exists an attractor $A_{\lambda }$ of $\Phi _{\lambda }$
with $Sh(A_{\lambda })=Sh(A)$. In particular $K_{\lambda }$ has the same
\v{C}ech homology and cohomology as $K$. Moreover $A_{\lambda }\rightarrow A$
when $\lambda \rightarrow 0$ (i.e. all $A_{\lambda }$ are contained in an
arbitrary neighborhood of $A$ in $\mathbb{R}^{2}$ for $\lambda $
sufficiently small).
\end{theorem}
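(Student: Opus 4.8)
The plan is to reduce the statement to Borsuk's shape classification theorem by showing that $A$ and $A_\lambda$ decompose the plane into the same number of connected components, exactly as in the proof of the previous theorem. The starting point is that a single trapping region works for the whole family. Since $A$ is an attractor of $\Phi_0$, there is a compact neighborhood $N$ with $\Phi_0(N)\subset\mathring N$ and $A=\bigcap_{n\ge 0}\Phi_0^n(N)$. By the robustness of trapping regions recalled in Section 1, there is a $\lambda_0>0$ such that $\Phi_\lambda(N)\subset\mathring N$ for every $\lambda\le\lambda_0$; hence for each such $\lambda$ the set $A_\lambda=\bigcap_{n\ge 0}\Phi_\lambda^n(N)$ is an attractor of $\Phi_\lambda$. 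The key point is that the \emph{same} $N$ serves as a trapping region for every member of the family.

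First I would settle the convergence $A_\lambda\to A$. Given an open neighborhood $W$ of $A$, a standard compactness argument applied to the nested compacta $\Phi_0^n(N)\searrow A$ produces a $k$ with $\Phi_0^k(N)\subset W$. Since $(\lambda,x)\mapsto\Phi_\lambda(x)$ is continuous and $\Phi_0^k(N)$ is compact, after shrinking $\lambda_0$ we get $\Phi_\lambda^k(N)\subset W$ for all small $\lambda$, whence $A_\lambda\subset\Phi_\lambda^k(N)\subset W$. This gives the ``moreover'' clause and, in particular, places every $A_\lambda$ inside an arbitrarily small neighborhood of $A$.

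The heart of the matter is a purely topological lemma: if $f$ is a homeomorphism of $\mathbb{R}^2$ and $A=\bigcap_{n\ge 0}f^n(N)$ is an attractor with trapping region $N$, then $A$ and $N$ decompose $\mathbb{R}^2$ into the same number of connected components. Granting this, the theorem follows at once: applying the lemma to $\Phi_0$ and to $\Phi_\lambda$ with the common region $N$ gives that $A$ and $A_\lambda$ both disconnect the plane into as many components as $N$ does, and Borsuk's theorem then yields $Sh(A_\lambda)=Sh(A)$; the equality of \v{C}ech homology and cohomology is immediate since these are shape invariants. To prove the lemma I would argue that the inclusions in the increasing union $\mathbb{R}^2\setminus A=\bigcup_n(\mathbb{R}^2\setminus f^n(N))$ induce bijections on $\pi_0$. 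Each $f^n(N)$ is a homeomorphic copy of $N$, so $\mathbb{R}^2\setminus f^n(N)$ has the same (possibly infinite) number $c$ of components for every $n$. Because $f$ maps the complement of $f^n(N)$ onto that of $f^{n+1}(N)$, sending the unbounded component to the unbounded component and each bounded complementary component (a ``hole'') into a hole, and because of the interior-nesting $f^{n+1}(N)\subset\mathring{f^n(N)}$, one checks that no complementary component is created or destroyed from one level to the next; hence each bonding map is a bijection and the colimit $\pi_0(\mathbb{R}^2\setminus A)$ again has $c$ elements.

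The main obstacle I anticipate is precisely this last verification, namely that along the nested intersection no hole disappears into the unbounded region and no new hole is spawned between consecutive levels. The count $c$ being constant forces any merging to be compensated by a splitting, so the real work is to rule out either phenomenon using the planar homeomorphism $f$ and the condition $f^{n+1}(N)\subset\mathring{f^n(N)}$; for instance, from $f^{-1}(\mathbb{R}^2\setminus f^n(N))\subset\mathbb{R}^2\setminus f^n(N)$ one should be able to show that each hole of $f^n(N)$ sits inside a unique hole of $f^{n+1}(N)$, which pins down the bijection. A secondary point to address is that Borsuk's theorem is stated for continua, so one should either restrict to the case in which $A$, and hence $A_\lambda$, is connected, or supplement the complementary-component count with the analogous statement for the number of components of the sets themselves.
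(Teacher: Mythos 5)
Your opening step (a single trapping region $N$ that works for all $\Phi_\lambda$ with $\lambda$ small) and your argument for $A_\lambda\rightarrow A$ both match the paper. But the purely topological lemma on which you hang the shape computation is false: an attractor $A=\bigcap_{n\geq 0}f^{n}(N)$ need not decompose the plane into the same number of components as its trapping region. Concretely, let $f(x)=x/2$ on $\mathbb{R}^{2}$ and let $N=\bar{D}(0,1)\setminus \mathring{D}(c,1/10)$ with $c=(7/10,0)$, a closed disk with one small hole. Then $f(N)\subset \bar{D}(0,1/2)\subset \mathring{N}$, so $N$ is a trapping region, and $\bigcap_{n}f^{n}(N)=\{0\}$: here $\mathbb{R}^{2}\setminus N$ has two components while $\mathbb{R}^{2}\setminus \{0\}$ is connected. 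This example also pinpoints exactly where your $\pi_{0}$ argument breaks. In the colimit $\mathbb{R}^{2}\setminus A=\bigcup_{n}\left(\mathbb{R}^{2}\setminus f^{n}(N)\right)$ the bonding maps are induced by the \emph{inclusions}, not by the homeomorphism $f$; it is true that $f$ carries holes of $f^{n}(N)$ bijectively onto holes of $f^{n+1}(N)$, but that is irrelevant. What matters is where a hole of $f^{n}(N)$ sits as a \emph{subset} of $\mathbb{R}^{2}\setminus f^{n+1}(N)$, and it can sit inside the unbounded component (as the hole of $N$ does in the example), while the hole of $f^{n+1}(N)$ is carved out of the interior of $f^{n}(N)$ and hence is not in the image of the bonding map. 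So holes do merge into the unbounded region and new holes are spawned at every level; the constant per-level count $c$ does not pass to the limit. The obstacle you flagged as the ``main obstacle'' is not a technical verification to be completed --- it is a genuine phenomenon that the interior nesting $f^{n+1}(N)\subset \mathring{f^{n}(N)}$ cannot rule out.

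The missing idea, which is the paper's actual route, is to replace the arbitrary trapping region by a neighborhood that is \emph{tight} around $A$. The paper first invokes the fact that $\mathbb{R}^{2}\setminus A$ has only finitely many components, say $r$ (its Corollary 2, obtained from the results of Section 3; see also \cite{rps}) --- a finiteness statement your sketch never establishes and which does not follow from Borsuk's theorem, since a priori $A$ could have the shape of the Hawaiian earring. Finiteness makes it possible to choose inside $N$ a topological disk with $r-1$ holes, $D\supset A$, such that the inclusion $A\hookrightarrow D$ is a shape equivalence. With this choice the inclusion $\Phi_{0}^{k}(D)\hookrightarrow D$ is a homotopy equivalence of disks with $r-1$ holes, and --- this is the perturbation step with no counterpart in your proposal --- it remains a homotopy equivalence for $\Phi_{\lambda}^{k}(D)\hookrightarrow D$ when $\lambda$ is small; consequently all inclusions $\Phi_{\lambda}^{(n+1)k}(D)\hookrightarrow \Phi_{\lambda}^{nk}(D)$ are homotopy equivalences and $A_{\lambda}=\bigcap_{n}\Phi_{\lambda}^{nk}(D)$ has the shape of $D$, i.e. $Sh(A_{\lambda})=Sh(A)$. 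In short: the shape information is not stored in the trapping region itself (my example shows it cannot be), but in a carefully chosen neighborhood whose inclusion of $A$ is a shape equivalence, together with the stability of homotopy equivalences under small perturbation. Your secondary worry about connectedness (Borsuk's theorem concerns continua) is legitimate but minor by comparison; the paper's hypothesis should indeed be read with $A$ a connected attractor, as in its Corollary 2.
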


\begin{proof}
An argument similar to the one used in the proof of Theorem 2.1 shows that if $N$ is a trapping region for the attractor $A$ of $\Phi _{0}$ then $N$ is
also a trapping region for every $\Phi _{\lambda }$ with $\lambda $
sufficiently small. Hence there exists an attractor $A_{\lambda }$ for $\Phi
_{\lambda }$ corresponding to the trapping region $N$.

On the other hand, by Corollary 2 below (see also \cite{rps}), $\mathbb{R}^{2}\setminus A$ has a finite number of connected components (say $r$).
Then there exists a topological disk with $r-1$ holes $D\subset N$ containing $A$
in its interior and such that the inclusion $j:A\hookrightarrow D$ is a shape
equivalence. Moreover, as in the proof of Theorem 2.1, there exists a $k>0$
such that $A$ is the intersection of the sets $\Phi ^{(n+1)k}(D)\subset \Phi
^{nk}(\mathring{D})$ with $n=0,1,\dots $. Since $j:A\hookrightarrow D$ is a shape
equivalence and $\Phi ^{k}(D)$ is also a topological disk with $r-1$ holes
contained in $D$ and containing $A$, then the inclusion $\Phi
^{k}(D)\hookrightarrow D$ is a homotopy equivalence.
Now, as in Theorem 2.1, $A_{\lambda }$ is the intersection of the sets $\Phi _{\lambda
}^{(n+1)k}(D)\subset \Phi _{\lambda }^{nk}(\mathring{D})$ with $n=0,1,\dots $.
If we choose $\lambda $ sufficiently small, then the inclusion $\Phi
_{\lambda }^{k}(D)\hookrightarrow D$ is also a homotopy equivalence and, thus,
the inclusion $\Phi _{\lambda }^{(n+1)k}(D)\hookrightarrow \Phi _{\lambda
}^{nk}(D)$ is a homotopy equivalence for every $n$ as well. Hence, since
$K_{\lambda }=\bigcap _{n=0}^{\infty }\Phi _{\lambda }^{nk}(D),$ then $K_{\lambda }$ has the shape of a topological disk with $r-1$ holes and,
thus, $Sh(K_{\lambda })=Sh(K)$.
The last part of the theorem is a
consequence of the fact that $D$ can be chosen arbitrarily small.
\end{proof}

\section{Quasi-attractors of flows and attractors of {ho\-meo\-mor\-phisms} of $\mathbb{R}^{n}$}

It is well-known that there are continua in $\mathbb{R}^{n}$ which are
attractors of homeomorphisms of $\mathbb{R}^{n}$ but which are not
attractors of flows defined on $\mathbb{R}^{n}$. One of such examples is the
solenoid in $\mathbb{R}^{3}$. It is, however, interesting to explore
possible relations between these two notions. An interesting connection can
be found by using the notion of quasi-attractor of a flow. This notion,
introduced by C. Conley, plays an important role in his study of the chain
recurrent set and the gradient structure of a flow \cite{con}. There is a similar notion of quasi-attractor of a homeomorphism, which was used by J. Kennedy \cite{ken}, and M. Hurley \cite{hur} in their study of the generic properties of discrete dynamical systems.

\begin{definition}
Let $\varphi :\mathbb{R}^{n}\times \mathbb{R}\rightarrow \mathbb{R}^{n}$ be
a flow. A non-empty compactum $K\subset \mathbb{R}^{n}$ is said to be a
quasi-attractor of $\varphi $ if it is the intersection of a family of
attractors of $\varphi $. The non-empty compactum $K$ is a tame quasi-attractor of $\varphi $ if it is the intersection of a nested sequence of attractors of $\varphi $,  $A_{1}\supset A_{2}\supset \dots \supset A_{n}\supset \dots $, all of
them homeomorphic.
\end{definition}

Our next result shows that tame quasi-attractors of flows share with
attractors of flows the following important property.

\begin{theorem}
Let $K$ be a tame quasi-attractor of a flow on $\mathbb{R}^{n}$. Then the
\v{C}ech homology and cohomology of $K$ with coefficients in a field is
finitely generated in every dimension.
\end{theorem}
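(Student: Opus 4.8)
The plan is to reduce the statement to a purely algebraic fact about direct limits of finite dimensional vector spaces, exploiting that the approximating attractors are all homeomorphic. Write $K=\bigcap_{n\ge 1}A_{n}$ with $A_{1}\supset A_{2}\supset\cdots$ a nested sequence of attractors of the flow, all homeomorphic to a fixed compactum, so that $K=\varprojlim(A_{n},i_{n})$, where $i_{n}\colon A_{n+1}\hookrightarrow A_{n}$ are the inclusions. Fix a field $\mathbb{F}$ of coefficients. The first ingredient is that each $A_{n}$, being an attractor of a flow on the manifold $\mathbb{R}^{n}$, has finitely generated \v{C}ech homology and cohomology in every dimension, by the result recalled in the Preliminaries (\cite{gus,san11}); moreover, since the $A_{n}$ are mutually homeomorphic and \v{C}ech cohomology is a topological (indeed shape) invariant, the dimension $d_{q}:=\dim_{\mathbb{F}}\check{H}^{q}(A_{n};\mathbb{F})$ does not depend on $n$.

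Next I would invoke the continuity (tautness) of \v{C}ech cohomology on compacta: since $K$ is the intersection of the nested compacta $A_{n}$, there is a natural isomorphism
\[
\check{H}^{q}(K;\mathbb{F})\;\cong\;\varinjlim_{n}\check{H}^{q}(A_{n};\mathbb{F}),
\]
the direct limit being taken along the restriction homomorphisms $i_{n}^{*}\colon\check{H}^{q}(A_{n};\mathbb{F})\to\check{H}^{q}(A_{n+1};\mathbb{F})$. The key elementary observation is that a direct limit of a sequence of $\mathbb{F}$-vector spaces of uniformly bounded dimension is again finite dimensional, of dimension at most that bound: denoting by $\psi_{n}\colon\check{H}^{q}(A_{n};\mathbb{F})\to\check{H}^{q}(K;\mathbb{F})$ the canonical maps, the images $\operatorname{im}\psi_{n}$ form an increasing chain of subspaces of $\check{H}^{q}(K;\mathbb{F})$ with $\dim\operatorname{im}\psi_{n}\le d_{q}$, whose union is all of $\check{H}^{q}(K;\mathbb{F})$; such a chain must stabilize, so $\dim_{\mathbb{F}}\check{H}^{q}(K;\mathbb{F})\le d_{q}<\infty$. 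This is precisely where tameness enters: without the uniform bound provided by the common homeomorphism type, the direct limit could perfectly well be infinite dimensional (as happens for genuinely wild intersections).

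Finally, to pass from cohomology to homology I would use duality over the field $\mathbb{F}$. For a compact metric space, $\check{H}^{q}(-;\mathbb{F})$ is the direct limit over nerves $N_{\alpha}$ of finite open covers of the simplicial cohomologies $H^{q}(N_{\alpha};\mathbb{F})=\operatorname{Hom}_{\mathbb{F}}(H_{q}(N_{\alpha};\mathbb{F}),\mathbb{F})$, while $\check{H}_{q}(-;\mathbb{F})=\varprojlim_{\alpha}H_{q}(N_{\alpha};\mathbb{F})$. Since $\operatorname{Hom}_{\mathbb{F}}(-,\mathbb{F})$ turns direct limits into inverse limits and is an involution on finite dimensional spaces, one obtains a natural isomorphism $\check{H}_{q}(K;\mathbb{F})\cong\operatorname{Hom}_{\mathbb{F}}(\check{H}^{q}(K;\mathbb{F}),\mathbb{F})$. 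Hence $\check{H}_{q}(K;\mathbb{F})$ is the dual of the finite dimensional space $\check{H}^{q}(K;\mathbb{F})$ and is itself finite dimensional of the same dimension, which gives finite generation of both the \v{C}ech homology and cohomology of $K$ in every dimension, as claimed. The main obstacle here is conceptual rather than computational: one must recognize that the only way finiteness can fail for such an intersection is through unbounded growth of the Betti numbers of the approximants, which the tameness hypothesis rules out; the continuity of \v{C}ech cohomology and the field duality are then standard tools that do the rest.
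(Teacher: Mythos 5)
Your proposal is correct, and it reaches the conclusion by a genuinely different route from the paper. The paper works directly with homology: by continuity of \v{C}ech homology, $\check{H}_{r}(K)=\varprojlim\check{H}_{r}(K_{k})$, an inverse sequence of copies of one fixed finite-dimensional space $V$, and the heart of its argument is a Mittag--Leffler-type stabilization lemma: the eventual images $V_{k}\subset V$ of the compositions of bonding maps have eventually constant dimension, so the bonding maps restrict to isomorphisms $V_{k+1}\to V_{k}$ for almost all $k$, whence the inverse limit is isomorphic to $V_{k}$ for large $k$; cohomology is then dismissed as ``similar.'' You invert this: you treat cohomology first, where continuity produces a \emph{direct} limit and finiteness is immediate (an increasing chain of subspaces $\operatorname{im}\psi_{n}$ of uniformly bounded dimension must stabilize --- no eventual-image analysis is needed), and you then transport the result to homology via the field duality $\check{H}_{q}(K;\mathbb{F})\cong\operatorname{Hom}_{\mathbb{F}}(\check{H}^{q}(K;\mathbb{F}),\mathbb{F})$, obtained by dualizing the universal-coefficient isomorphism over the nerves and using that $\operatorname{Hom}_{\mathbb{F}}(-,\mathbb{F})$ converts colimits into limits. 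Both arguments use the same inputs --- attractors of flows on manifolds have finitely generated \v{C}ech (co)homology, the homeomorphism hypothesis yields the uniform bound $d_{q}$, and continuity of the \v{C}ech theories on compacta --- and, notably, both really only need the uniform bound on Betti numbers rather than the full strength of ``all $A_{n}$ homeomorphic.'' What your route buys is economy: the delicate inverse-limit stabilization is replaced by a trivial colimit observation plus standard duality, and you get the sharper byproduct $\dim_{\mathbb{F}}\check{H}_{q}(K;\mathbb{F})=\dim_{\mathbb{F}}\check{H}^{q}(K;\mathbb{F})\le d_{q}$; moreover you avoid invoking the continuity theorem for \v{C}ech homology (which requires some care, as exactness already fails for that theory), using only its definition as $\varprojlim_{\alpha}H_{q}(N_{\alpha};\mathbb{F})$. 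What the paper's route buys is a concrete identification of $\check{H}_{r}(K)$ with the eventual image $V_{k}$ of the inverse sequence, obtained entirely within homology and without the detour through duality.
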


\begin{proof}
Suppose that $K$ is the intersection of a family of attractors
$$
K_{1}\supset K_{2}\supset \dots \supset K_{k}\supset K_{k+1}\supset \dots
$$
of a flow $\varphi :\mathbb{R}^{n}\times \mathbb{R\rightarrow R}^{n}$, all
of them homeomorphic. Every inclusion
$j_{k}:K_{k+1}\hookrightarrow K_{k}$
induces a
homomorphism $\check{H}_{r}(j_{k}):\check{H}_{r}(K_{k+1})\rightarrow
\check{H}_{r}(K_{k})$ and in this way we obtain an inverse sequence of
vector spaces
$$
\check{H}_{r}(K_{1})\leftarrow \check{H}_{r}(K_{2})\leftarrow \dots \leftarrow \check{H}_{r}(K_{k})\leftarrow
\check{H}_{r}(K_{k+1})\leftarrow \dots
$$
where coefficients are taken in a field. Then, by the continuity of \v{C}ech
homology, we have that $\check{H}_{r}(K)=\underleftarrow{\lim }\check{H} _{r}(K_{k})$. It is known that the \v{C}ech homology of an attractor is
finitely generated (indeed attractors of flows have finite polyhedral
shape), see \cite{gus, san94, kar}. Since the attractors $K_{k}$ are
homeomorphic we have that all $\check{H}_{r}(K_{k})$ are vector spaces
isomorphic to a finite-dimensional vector space $V$ (the same for every $k$)
and the previous inverse sequence takes (up to isomorphism) the form
$$
V\leftarrow V\leftarrow \dots \leftarrow V\leftarrow V\leftarrow \dots
$$
where every arrow represents an endomorphism $h_{k}:V\rightarrow V$ (which
can vary with $k$). Since $V$ \ is finite-dimensional then for every $k$
there exists a subspace $V_{k}$ of $V$ such that $V_{k}$ is the image of the
composition of homomorphisms $h_{k}\circ h_{k+1}\circ \dots \circ h_{k+l}$ \
for every $l\in \mathbb{N}$ except, perhaps, for a finite number of them.
Replacing the inverse sequence, if necessary, by a subsequence we can assume
that the image of the restriction
$h_{k}|V_{k+1}:V_{k+1}\rightarrow V$
is $h_{k}(V_{k+1})=V_{k}$. On the other hand, since $\dim (V_{k})\leq \dim (V_{k+1})$ and $V$ is finite-dimensional then necessarily dim $(V_{k})$ is
the same for almost every $k$ and, hence, $h_{k}|V_{k+1}:V_{k+1}\rightarrow
V_{k}$ is an isomorphism for almost every $k$. On the other hand, the
inverse limit of
$$
V\leftarrow V\leftarrow \dots \leftarrow V\leftarrow V\leftarrow \dots
$$
agrees with the inverse limit of
$$
V_{1}\leftarrow V_{2}\leftarrow \dots \leftarrow V_{k}\leftarrow V_{k+1}\leftarrow \dots
$$
where the arrows are the homomorphisms $h_{k}|V_{k+1}:V_{k+1}\rightarrow
V_{k}$. Since almost all these arrows are isomorphisms, the inverse limit of
the former sequence is isomorphic to $V_{k}$ for sufficiently high $k$.
Hence $\check{H}_{r}(K)$ is finitely generated. The proof for cohomology is
similar.
\end{proof}

We prove in the following result that attractors of homeomorphisms and tame
quasi-attractors of flows are intimately related.

\begin{theorem}
Every continuum of $\mathbb{R}^{n}$ is a quasi-attractor of a flow on $\mathbb{R}^{n}$. Every connected attractor of a homeomorphism of $\mathbb{R} ^{n}$ is a tame quasi-attractor of a flow on $\mathbb{R}^{n}$.
\end{theorem}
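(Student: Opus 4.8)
The plan is to treat the two assertions separately, constructing an explicit flow in each case. For the first assertion, let $K\subset\mathbb{R}^{n}$ be a continuum. I would first produce a smooth proper function $\phi:\mathbb{R}^{n}\to[0,\infty)$ with $\phi^{-1}(0)=K$; such a $\phi$ exists because the zero set of a nonnegative smooth function can be prescribed to be any closed set, and properness is arranged by adding a term $\rho(|x|)$ that vanishes on a large ball containing $K$ and grows to infinity. Consider the (rescaled, hence complete) negative gradient flow $\varphi$ of $\phi$. Every point of $K$ is a rest point, so $K$ is invariant, and for each regular value $c>0$ the sublevel set $A_{c}=\{\phi\le c\}$ is a compact neighborhood of $K$ on whose boundary $\phi$ strictly decreases; thus $\varphi_{t}(A_{c})\subset\mathring{A_{c}}$ for $t>0$, so $A_{c}$ is a trapping region and determines an attractor $\widetilde{A}_{c}=\bigcap_{t\ge 0}A_{c}[t,\infty)$, its maximal invariant set. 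Since $K$ is invariant and $K\subset A_{c}$ we get $K\subset\widetilde{A}_{c}\subset A_{c}$, and choosing (by Sard) regular values $c_{i}\to 0$ yields $\bigcap_{i}\widetilde{A}_{c_{i}}\subset\bigcap_{i}A_{c_{i}}=K$, whence $K=\bigcap_{i}\widetilde{A}_{c_{i}}$ and $K$ is a quasi-attractor of $\varphi$. It is essential that we do \emph{not} claim $\widetilde{A}_{c}=K$: a wild $K$ need not itself be an attractor of any flow, so the approximating attractors must be allowed to be strictly larger.

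For the second assertion, let $A$ be a connected attractor of a homeomorphism $h$ of $\mathbb{R}^{n}$ and fix a trapping region $N$, which we may take to be a compact $n$-manifold neighborhood, with $h(N)\subset\mathring{N}$ and $A=\bigcap_{m\ge 0}h^{m}(N)$. Put $N_{m}=h^{m}(N)$. These are compact, pairwise homeomorphic, nested with $N_{m+1}\subset\mathring{N_{m}}$, and $\bigcap_{m}N_{m}=A$. I would reduce the problem to constructing a single flow $\psi$ on $\mathbb{R}^{n}$ for which every $N_{m}$ is an attractor: the $N_{m}$ are then invariant attractors nesting down to $A$, so their intersection exhibits $A$ as a tame quasi-attractor, and the very same invariance shows that $A$ itself need not be an attractor of $\psi$, since every neighborhood of $A$ contains an invariant $N_{m}\supsetneq A$. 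This is exactly what is consistent with the known impossibility for sets such as the solenoid.

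To build $\psi$ I would work shell by shell. Writing $C_{m}=N_{m}\setminus\mathring{N_{m+1}}$, the homeomorphism $h$ carries the compact cobordism $C_{0}$, with boundary $\partial N_{0}\sqcup\partial N_{1}$, onto $C_{m}$, so it suffices to design the flow on $C_{0}$, transport it by the iterates of $h$, and simultaneously scale its speed to zero as $m\to\infty$ so that $\psi$ extends continuously over $A$ by fixing $A$ pointwise. On each shell I want both boundary components to be invariant and normally attracting, separated by an interior repelling set $R_{m}$ acting as a watershed: points between $R_{m}$ and $\partial N_{m+1}$ flow to $\partial N_{m+1}$, and those between $R_{m}$ and $\partial N_{m}$ flow to $\partial N_{m}$. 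Admitting this interior repeller is what frees the shells from being products $\partial N_{0}\times[0,1]$, and hence what accommodates nontrivially embedded attractors. With such a flow each $N_{m}$ is invariant, and a sufficiently thin outward collar $M_{m}$ of $\partial N_{m}$ inside $C_{m-1}$, chosen to avoid $R_{m-1}$, is a trapping region whose maximal invariant set is exactly $N_{m}$; thus every $N_{m}$ is an attractor. Finally I would extend $\psi$ over $\mathbb{R}^{n}\setminus\mathring{N_{0}}$ by any inward flow toward $\partial N_{0}$ and rescale for completeness.

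The hard part will be the shell flow and its global assembly: producing on the cobordism $C_{0}$ a flow with two invariant, normally attracting boundary components separated by an interior repeller, most cleanly realized as the gradient-like flow of a function whose minimum set is precisely $\partial N_{0}\sqcup\partial N_{1}$, and then gluing the $h$-translates of this flow across infinitely many shells into one continuous complete flow fixing $A$. The interior repeller is the crucial device that lets the argument cover shells that are not products, which is the delicate feature distinguishing the discrete, possibly wild, situation from the flow case.
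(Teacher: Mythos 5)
Your treatment of the first assertion is complete, correct, and genuinely different from the paper's. The paper proves both assertions with a single construction: it writes $K$ as the intersection of nested compact connected $n$-manifolds $M_{k+1}\subset\mathring{M}_{k}$, lets the flow move only along the fibers of a half-collar of each $\partial M_{k}$ (every other point being a rest point), and invokes Whitney's theorem on regular families of curves (Theorem 27A of \cite{whi}) to realize this family of oriented arcs, together with the remaining singletons, as the trajectory partition of an honest flow on $\mathbb{R}^{n}$; the attractors are the sets $K_{k}=M_{k}\setminus h_{k}(\partial M_{k}\times[0,1/2))$. Your gradient-flow argument --- a smooth proper defining function $\phi$ with $\phi^{-1}(0)=K$, regular sublevel sets $A_{c}$ as trapping regions, and $K\subset\widetilde{A}_{c_{i}}\subset A_{c_{i}}$ with regular values $c_{i}\to 0$ supplied by Sard --- is sound and avoids both the manifold exhaustion and Whitney's theorem. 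What it buys is brevity; what it gives up is all control over the attractors $\widetilde{A}_{c_{i}}$, which are merely maximal invariant sets, homeomorphic to nothing explicit. That is exactly why this route cannot yield tameness, and you rightly build a separate construction for the second assertion.

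For the second assertion your skeleton coincides with the paper's (nested homeomorphic manifold neighborhoods $N_{m}=h^{m}(N)$, a flow built shell by shell with speeds tending to $0$ at $A$, homeomorphic attractors nesting down to $A$), but the step you yourself flag as ``the hard part'' is the entire technical content, and your proposed realization of it does not work as stated. A gradient-like flow of a function whose minimum set is exactly $\partial N_{0}\sqcup\partial N_{1}$ need not have the watershed structure you describe: the interior critical set need not be a repeller (points on stable sets of interior saddles converge to those saddles, not to the boundary), and, worse, critical points may accumulate on the boundary, in which case no thin collar of $\partial N_{m}$ is free of invariant sets and the maximal invariant set of your trapping region strictly exceeds $N_{m}$. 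The repair is the paper's device: forget functions, take topological collars of the boundary components (Brown's collar theorem, so no smoothness is ever needed for the possibly wild $h^{m}(N)$), let the flow move only along half-collar fibers from depth $1/2$ to the boundary, and make every remaining point a rest point. Then a thin collar trapping region has maximal invariant set exactly $N_{m}$, because backward orbits cross any intermediate depth level in finite time; and the closed complement of the open half-collars in each shell is automatically a repeller, so your watershed comes for free, with no product structure and no global Lyapunov function on the shell.

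Two further points. The glue-and-rescale assembly across infinitely many shells accumulating on $A$ --- joint continuity at $A$ and at the shell boundaries, plus completeness --- is precisely what the paper outsources to Whitney: one checks that the non-stationary trajectories form a regular family of oriented curves and Theorem 27A of \cite{whi} produces the flow in one stroke, instead of a by-hand verification that your transported, $h$-distorted collar flows assemble into a continuous $\mathbb{R}$-action; if you keep the hand construction, this verification must actually be carried out. And your opening claim that the trapping region ``may be taken to be a compact $n$-manifold neighborhood'' needs an argument: choose a compact manifold neighborhood $M$ with $A\subset\mathring{M}\subset M\subset\mathring{N}$, pick $m$ with $h^{m}(N)\subset\mathring{M}$, so that $h^{m}(M)\subset\mathring{M}$, and run the construction with the iterate $h^{m}$ --- harmless, since the flow only sees the nested manifolds; this is what the paper's subsequence $f^{n_{k}}(M)$ is silently doing.
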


\begin{proof}
Every continuum $K$ of $\mathbb{R}^{n}$ is the intersection of a nested
sequence of compact connected $n$-manifolds with boundary $(M_{k})_{k\in
\mathbb{N}}$ with $M_{k+1}\subset \mathring{M}_{k}$. Consider for every $k\in \mathbb{N}$
a collar $N_{k}$ of $\partial M_{k}$ which is, by definition, an open
neighborhood $N_{k}$ of $\partial M_{k}$ in $M_{k}$ homeomorphic to $\partial
M_{k}\times \lbrack 0,1)$ by a homeomorphism
$$h_{k}:\partial M_{k}\times \lbrack 0,1)\rightarrow N_{k}.$$
We may assume that $\bar{N}_{k} \cap M_{k+1}=\emptyset $. We define a flow in $\partial M_{k}\times \lbrack 0,1/2]
$ such that all points in $(\partial M_{k}\times \{0\})\cup (\partial
M_{k}\times \{1/2\})$ are equilibria and the trajectories of points $(x,t)\in \partial M_{k}\times (0,1/2)$ go along $\{x\}\times (0,1/2)$
connecting $(x,0)$ to $(x,1/2)$. By using the homeomorphism $h_{k}$ we carry
this flow to a flow defined in a subset of $N_{k}$. The non-stationary
trajectories of all the flows just defined form a family $\mathcal{C}$ of
oriented curves filling an open set $U$ of $\mathbb{R}^{n}$ which is regular
in the sense of Whitney \cite{whi}. We recall that a family of oriented curves $\mathcal{C}$ is regular if given an oriented arc $pq\subset \gamma \in
\mathcal{C}$ and $\epsilon >0$ there exists $\delta >0$ such that if
$p'\in \gamma '\in \mathcal{C}$ and $d(p,p')<\delta $ then there is a point $q'\in \gamma '$ such
that the oriented arcs $pq$ and $p'q'$ have a parameter
distance less that $\epsilon $ (that is, there exist parametrizations
$f:[0,1]\rightarrow pq$ and $f':[0,1]\rightarrow p'q'$ such that $d(f(t),f'(t))<\epsilon $ for every $t\in
[ 0,1]$). Consider now the partition $\mathcal{D}$ of $\mathbb{R}^{n}$
given by $\mathcal{C}$ and the singletons corresponding to the points of $\mathbb{R}^{n}$ not lying in such trajectories. By Whitney's Theorem 27A in \cite{whi}
there exists a flow $\varphi $ in $\mathbb{R}^{n}$ whose oriented
trajectories correspond to the elements of $\mathcal{D}$. It is easy to see
that $K_{k}=M_{k}\setminus h_{k}(\partial M_{k}\times [ 0,1/2))$ is an
attractor for every $k$ and that $K=\bigcap _{k\in \mathbb{N} } K_{k}$. Moreover, if $K$ is
attractor of a homeomorphism $f$ of $\mathbb{R}^{n}$ then the manifolds $M_{k}$ can be taken as images $f^{n_{k}}(M)$ of a neighborhood $M$ of $K$, which is also a manifold, and the collars $N_{k}$ can be taken as $f^{n_{k}}(N)$ where $N$ is a collar of $\partial M$ in $M$. Then the
manifolds $M_{k}$ are homeomorphic and also are the attractors $K_{k}$.
Hence $K$ is a tame quasi-attractor.
\end{proof}

\begin{corollary}
Let $K$ be a connected attractor of a homeomorphism of $\mathbb{R}^{n}$.
Then there is a flow $\varphi $ on $\mathbb{R}^{n}$ such that $K$ is the
limit in the Hausdorff metric of a sequence of attractors of $\varphi ,$ all
of them homeomorphic.
\end{corollary}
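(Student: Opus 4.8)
The plan is to derive this directly from Theorem 3.3. By the second assertion of that theorem, a connected attractor $K$ of a homeomorphism of $\mathbb{R}^{n}$ is a tame quasi-attractor of some flow $\varphi$ on $\mathbb{R}^{n}$. By the definition of tame quasi-attractor, this means precisely that there is a nested sequence of attractors of $\varphi$,
$$A_{1}\supset A_{2}\supset \dots \supset A_{k}\supset \dots,$$
all homeomorphic to one another, with $K=\bigcap_{k}A_{k}$. Thus the flow $\varphi$ and the homeomorphic sequence $(A_{k})$ are already supplied by Theorem 3.3; it only remains to check that $A_{k}\to K$ in the Hausdorff metric.

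Since $K\subset A_{k}$ for every $k$, one always has $K\subset (A_{k})_{\epsilon}$, so the only thing to verify is that for each $\epsilon>0$ there is a $k_{0}$ with $A_{k}\subset K_{\epsilon}$ for all $k\geq k_{0}$. I would argue this by contradiction using the compactness of $A_{1}$. If it failed, there would be an $\epsilon>0$ and points $x_{k}\in A_{k}$ with $d(x_{k},K)\geq \epsilon$; since all the $x_{k}$ lie in the compactum $A_{1}$, we may pass to a convergent subsequence $x_{k_{j}}\to x$. For each fixed $m$, the tail of this subsequence lies in $A_{m}$ (by nesting), and $A_{m}$ is closed, so $x\in A_{m}$; as this holds for every $m$, we get $x\in\bigcap_{m}A_{m}=K$, contradicting $d(x,K)\geq\epsilon$. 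This is exactly the standard fact that a decreasing sequence of compacta converges in the Hausdorff metric to its intersection, and it yields $d_{H}(A_{k},K)\to 0$.

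There is no serious obstacle here: all the substantive work---constructing the flow $\varphi$, producing the homeomorphic attractors $A_{k}$, and identifying their intersection with $K$---has already been carried out in the proof of Theorem 3.3. The corollary merely repackages that conclusion, replacing the phrase \emph{tame quasi-attractor} by the equivalent statement about Hausdorff convergence. The only point deserving a line of care is the inclusion $A_{k}\subset K_{\epsilon}$ above, since the reverse inclusion $K\subset (A_{k})_{\epsilon}$ is immediate from the nesting.
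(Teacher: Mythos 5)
Your proposal is correct and matches the paper's (implicit) reasoning exactly: the corollary is stated there without proof precisely because it follows from Theorem 3.3 together with the standard fact that a nested decreasing sequence of compacta converges in the Hausdorff metric to its intersection, which is the compactness argument you spell out. Your explicit verification of the inclusion $A_{k}\subset K_{\epsilon}$ is sound and fills in the only detail the paper leaves to the reader.
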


As a consequence of Theorem 3.2 and 3.3 we obtain the following result which was proved by F. Ruiz del Portal and J.J. S\'{a}nchez-Gabites for cohomology with coefficients in $\mathbb{Q} $ and $\mathbb{Z} _p $ ($p$ prime) in \cite{rps} where cohomological properties of attractors of discrete dynamical systems were studied.

\begin{corollary}
Let $K$ be a connected attractor of a homeomorphism of $\mathbb{R}^{n}$.
Then the \v{C}ech homology and cohomology of $K$ with coefficients in a field is
finitely generated in every dimension.
\end{corollary}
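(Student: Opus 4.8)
The plan is to chain together the two preceding results of this section, since the statement is precisely the composite of their hypotheses and conclusions. By Theorem 3.3, any connected attractor $K$ of a homeomorphism of $\mathbb{R}^{n}$ is a tame quasi-attractor of some flow $\varphi$ on $\mathbb{R}^{n}$; that is, $K=\bigcap_{k}K_{k}$ for a nested sequence of pairwise homeomorphic attractors $K_{1}\supset K_{2}\supset\cdots$ of $\varphi$. This is the key structural input, and it is exactly the hypothesis required by Theorem 3.2.

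First I would apply Theorem 3.3 to produce this realization of $K$ as a tame quasi-attractor. The crucial feature, supplied by the construction in that proof (taking the approximating manifolds as images $f^{n_{k}}(M)$ and the collars as $f^{n_{k}}(N)$), is that the attractors $K_{k}$ are mutually homeomorphic, so that their \v{C}ech homology groups with coefficients in a field are all isomorphic to a single finite-dimensional vector space $V$. Then I would invoke Theorem 3.2 directly: since $K$ is a tame quasi-attractor of $\varphi$, its \v{C}ech homology and cohomology with coefficients in a field are finitely generated in every dimension. Combining the two steps completes the proof.

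The only point worth checking is that the tameness hypothesis of Theorem 3.2 is genuinely met by the flow produced in Theorem 3.3, and this holds by construction, as noted above. Everything substantive has already been done: the nontrivial work lies entirely inside Theorem 3.2, where one cannot conclude finite generation merely from each $K_{k}$ having finitely generated homology, but must instead control the inverse system $\check{H}_{r}(K_{1})\leftarrow\check{H}_{r}(K_{2})\leftarrow\cdots$ and show that its inverse limit stabilizes. That stabilization argument — passing to the images $V_{k}$ of the tail compositions and observing that, by finite-dimensionality, the dimensions eventually freeze so that almost all bonding maps become isomorphisms — is precisely the content of Theorem 3.2. Hence, beyond assembling the two theorems, there is no further obstacle for this corollary.
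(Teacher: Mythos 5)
Your proposal is correct and is exactly the paper's argument: the corollary is stated there ``as a consequence of Theorem 3.2 and 3.3,'' i.e.\ apply Theorem 3.3 to realize the connected attractor $K$ of the homeomorphism as a tame quasi-attractor of a flow on $\mathbb{R}^{n}$, and then apply Theorem 3.2 to that flow to get finite generation of the \v{C}ech homology and cohomology with field coefficients. Your remark that all the substantive work (the stabilization of the inverse system of finite-dimensional vector spaces) already resides in Theorem 3.2 is also consistent with the paper, which gives the corollary no separate proof.
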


\section{Fractals in the plane and Conley attractors for IFS}

It is well-known that an attractor of a flow on a manifold must have the shape of a finite polyhedron \cite{gus, san94, kar}.
This fact imposes a strong restriction on the family of plane continua which may be attractors of flows. In particular, a continuum with the
shape of the Hawaiian earring cannot be attractor of a flow in the plane.
Similarly, we can ask ourselves which are the shapes that may have the fractals in the plane.
It can be easily shown, by using the Collage Theorem \cite{behl}, that all shapes are possible among the connected attractors of
IFS of contractive similarities in the plane. The following is an example of
one of such fractals with the shape of the circle.

\begin{example}
Consider the IFS with six similarities $h_1 , h_2 ,\dots ,h_{6} $, given by:
\begin{align*}
h_1 (x,y)&=\left( \dfrac{19}{30} x,\dfrac{19}{30} y\right) &
h_2 (x,y)&=\left( \dfrac{1}{2}\, \dfrac{11}{30} +\dfrac{19}{30} x,\dfrac{\sqrt{3}}{2}\, \dfrac{11}{30} +\dfrac{19}{30} y\right) \\
h_3 (x,y)&=\left( \dfrac{11}{30} +\dfrac{19}{30} x,\dfrac{19}{30} y\right) &
h_4 (x,y)&=\left( \dfrac{1}{2} x,\dfrac{1}{2} y\right) \\
h_5 (x,y)&=\left( \dfrac{1}{4} +\dfrac{x}{2} ,\dfrac{\sqrt{3}}{4} +\dfrac{y}{2} \right) &
h_6 (x,y)&=\left( \dfrac{1}{2} +\dfrac{x}{2} , \dfrac{y}{2} \right)
\end{align*}

The invariant set of the IFS and its image under the similarities are shown in Figure 1.

\begin{figure}[ht]
\begin{center}
\includegraphics[angle=0,width=3.4cm,keepaspectratio=true]{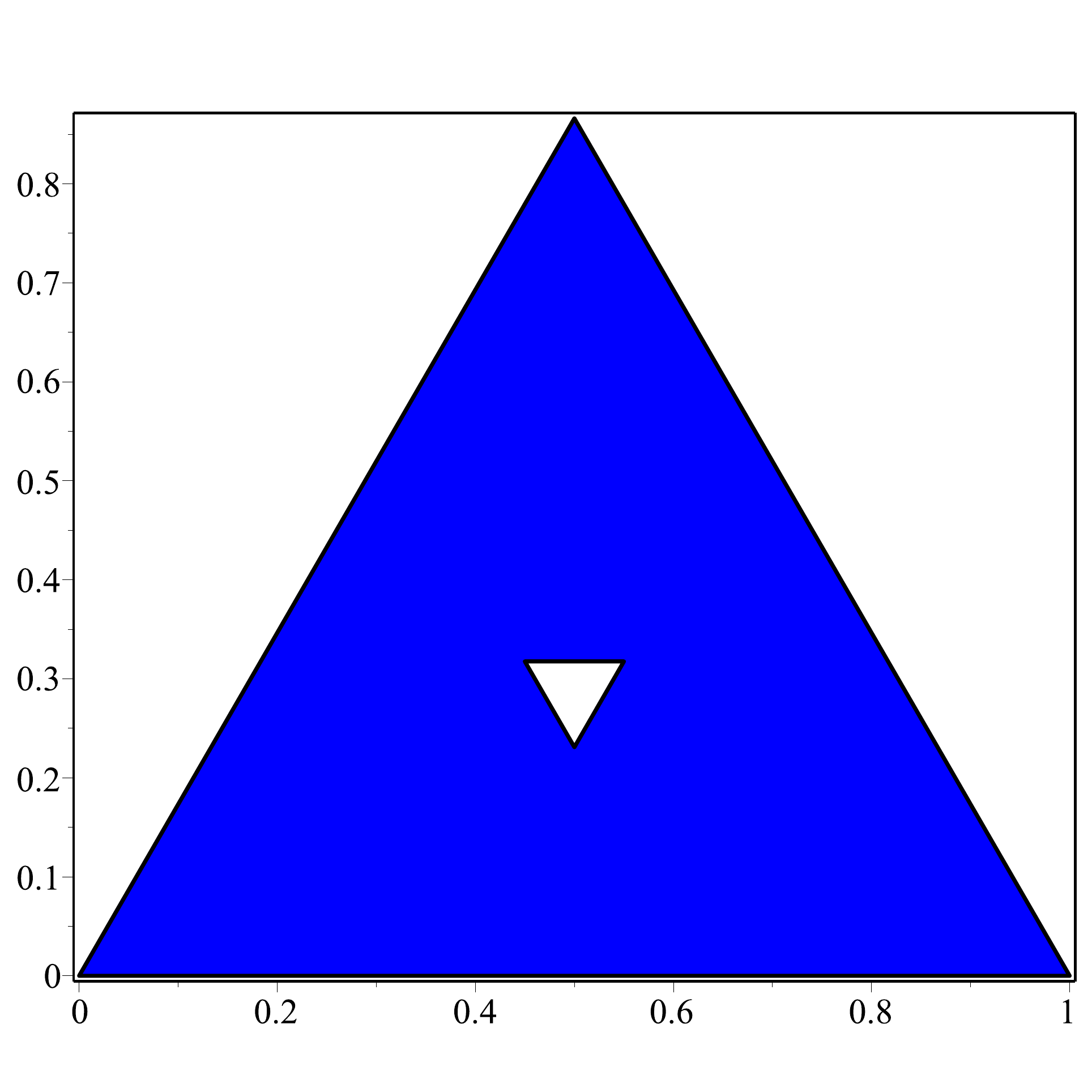}

\includegraphics[angle=0,width=3.4cm,keepaspectratio=true]{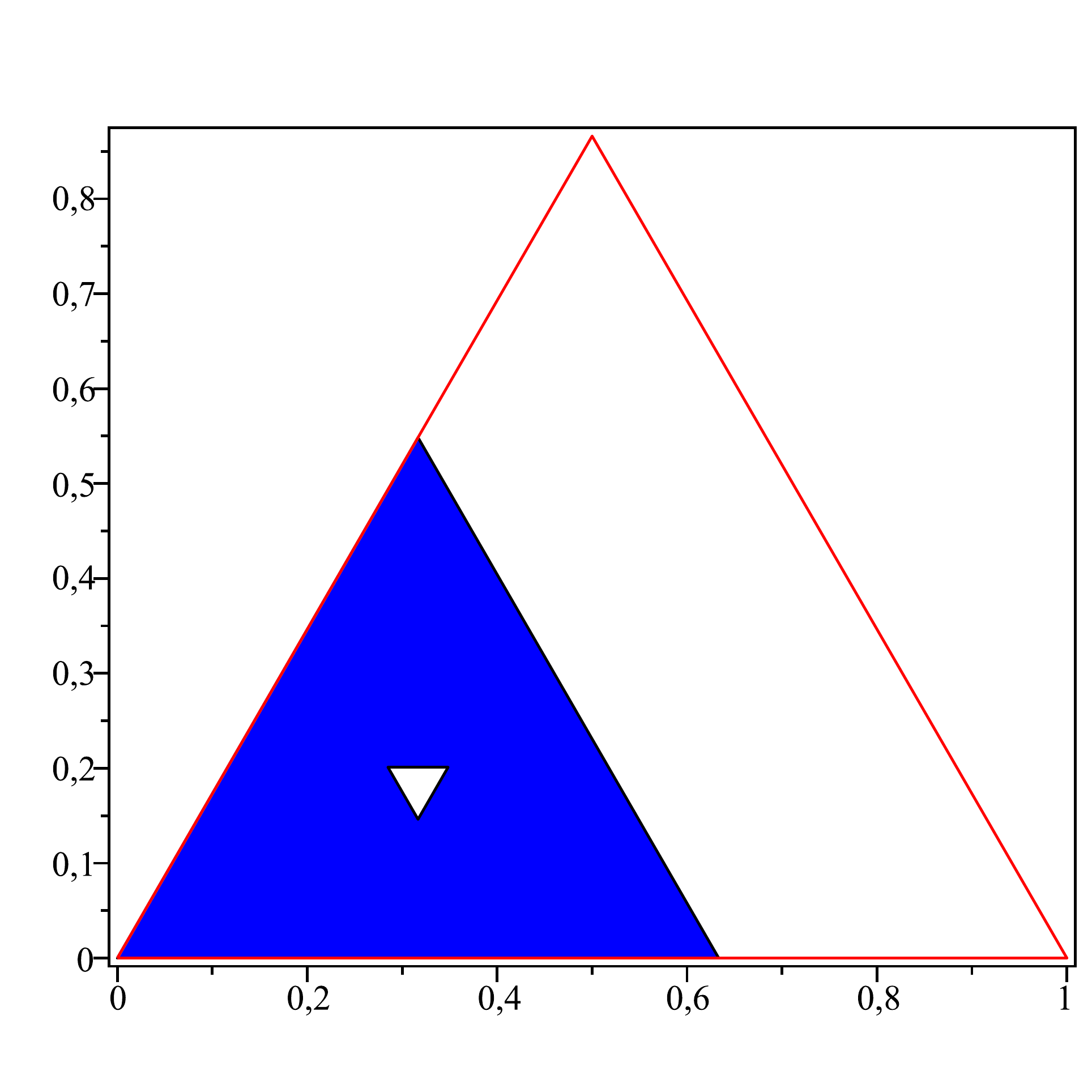}
\includegraphics[angle=0,width=3.4cm,keepaspectratio=true]{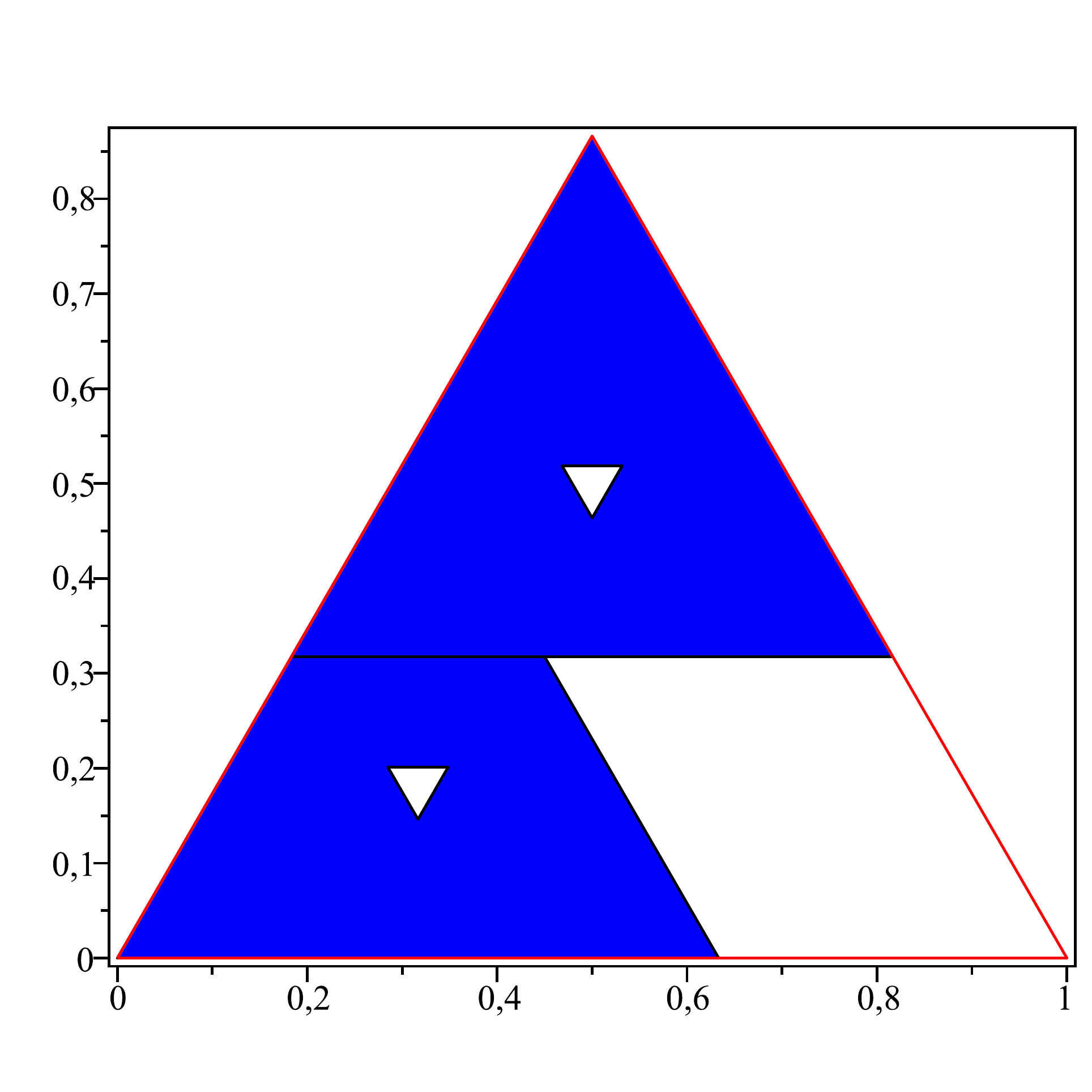}
\includegraphics[angle=0,width=3.4cm,keepaspectratio=true]{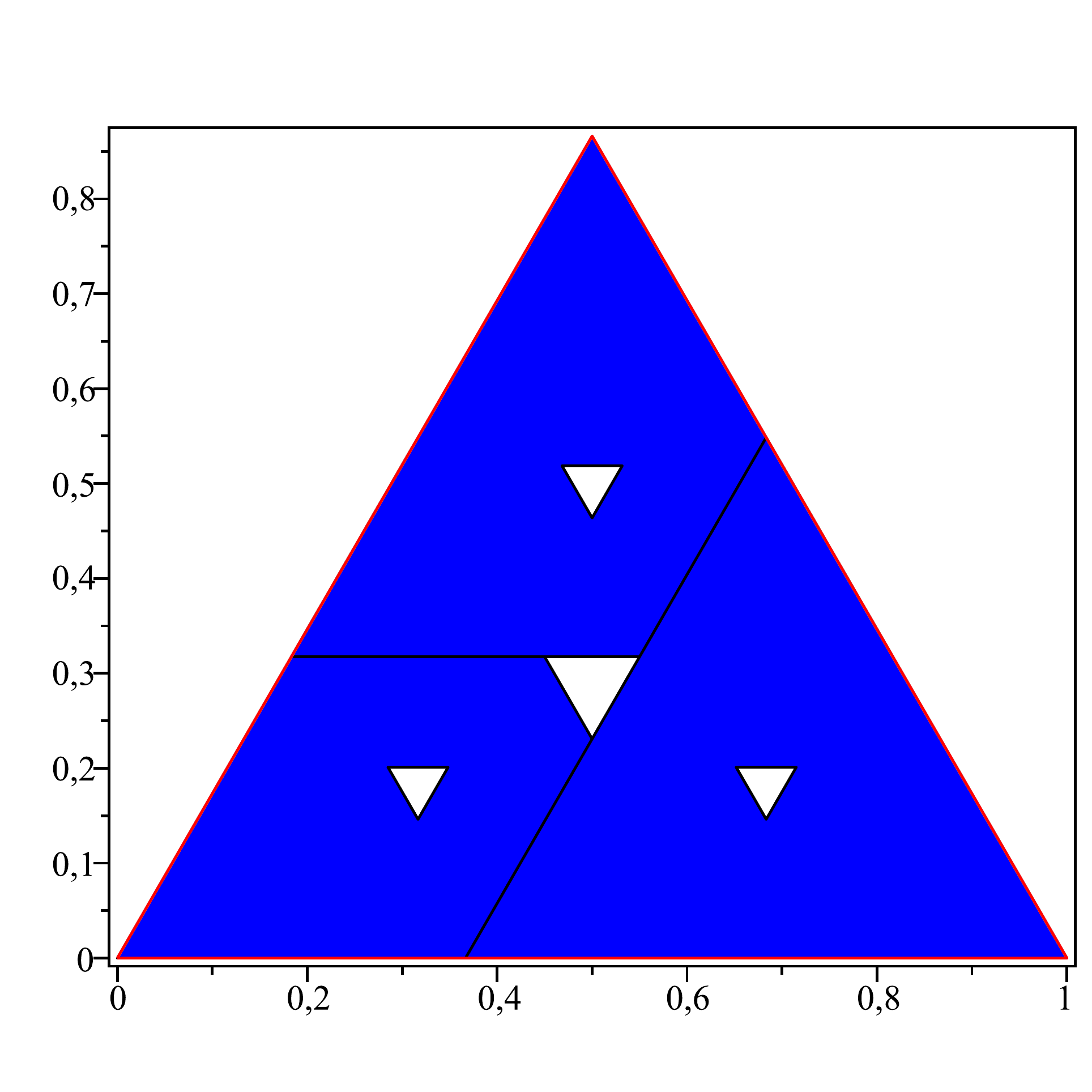}
\includegraphics[angle=0,width=3.4cm,keepaspectratio=true]{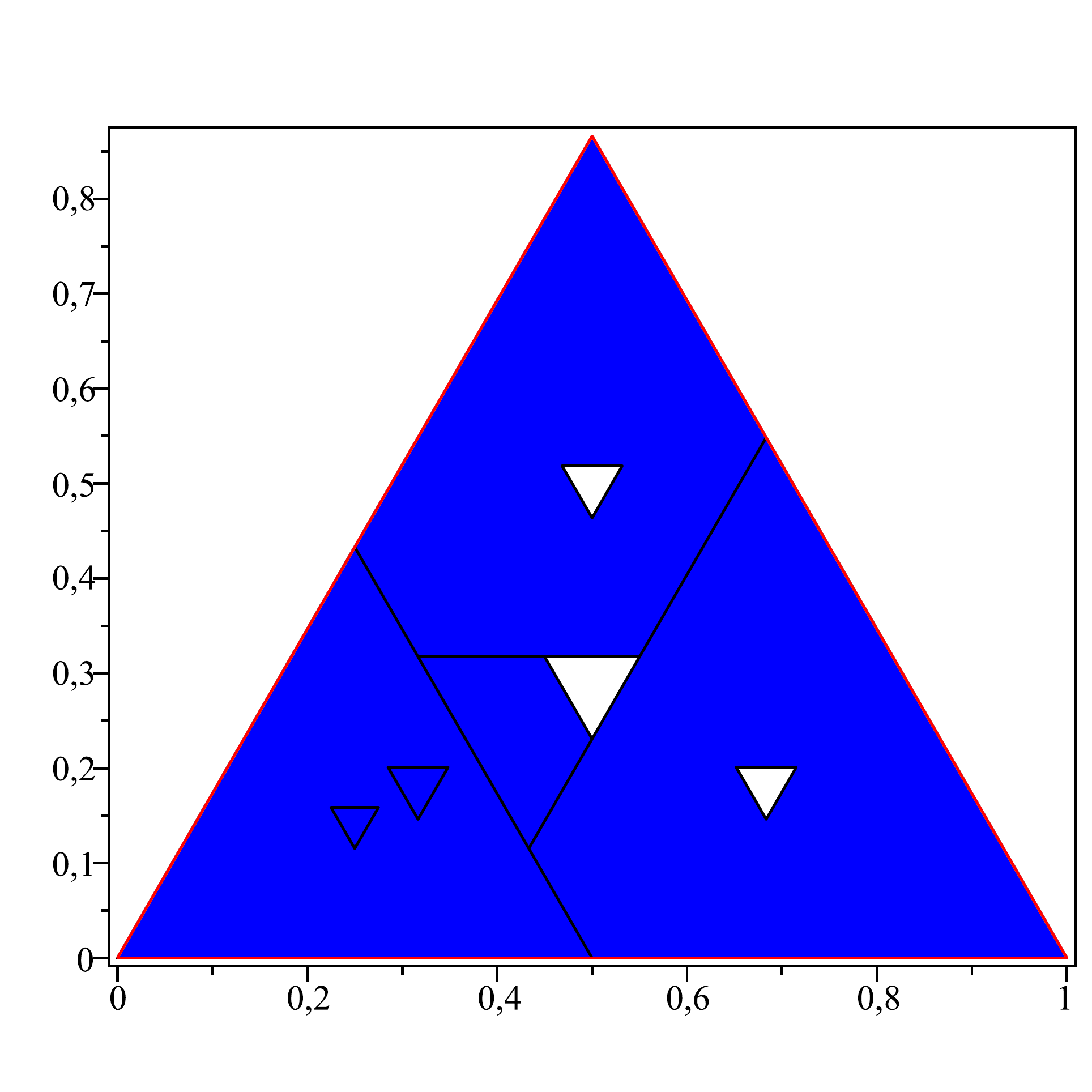}
\includegraphics[angle=0,width=3.4cm,keepaspectratio=true]{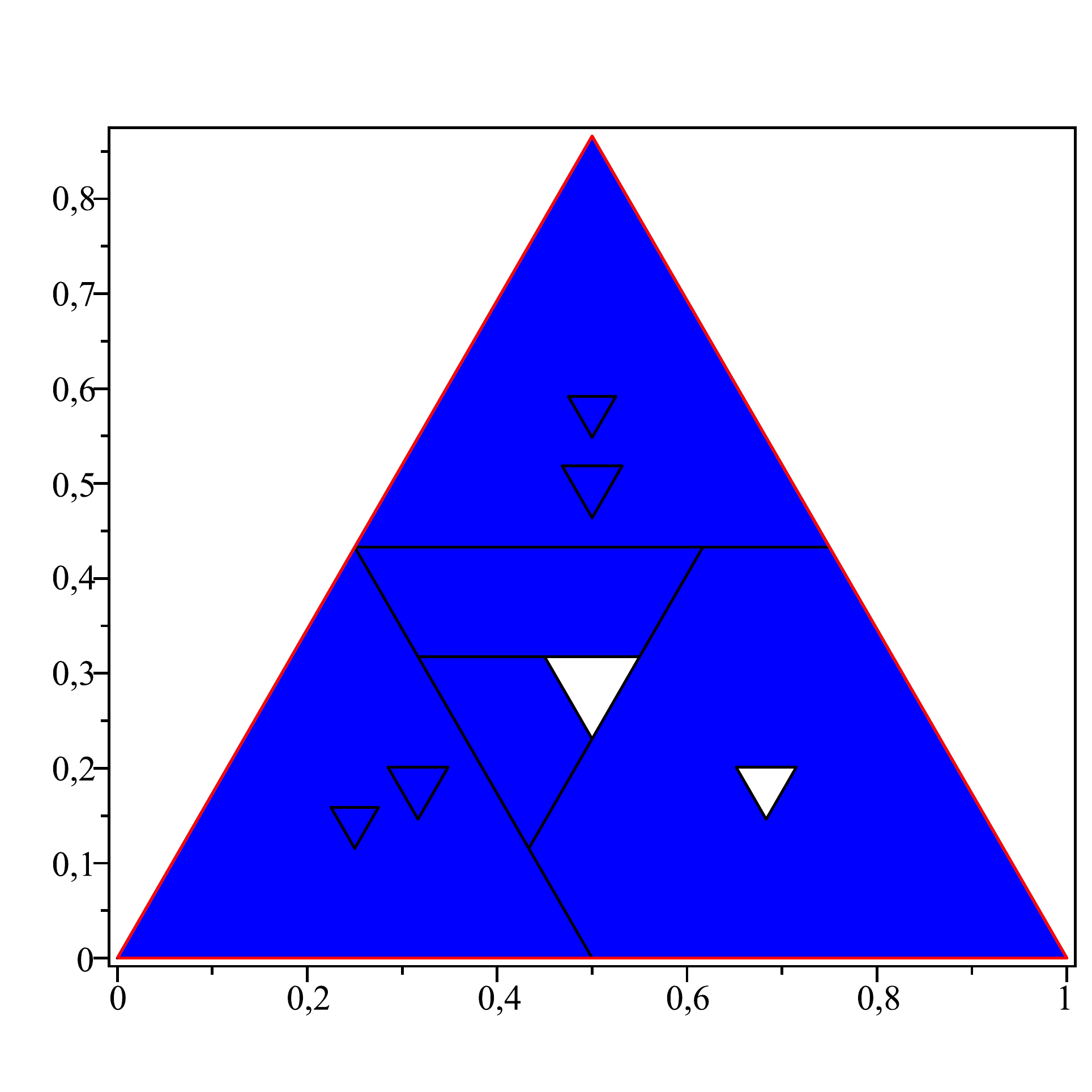}
\includegraphics[angle=0,width=3.4cm,keepaspectratio=true]{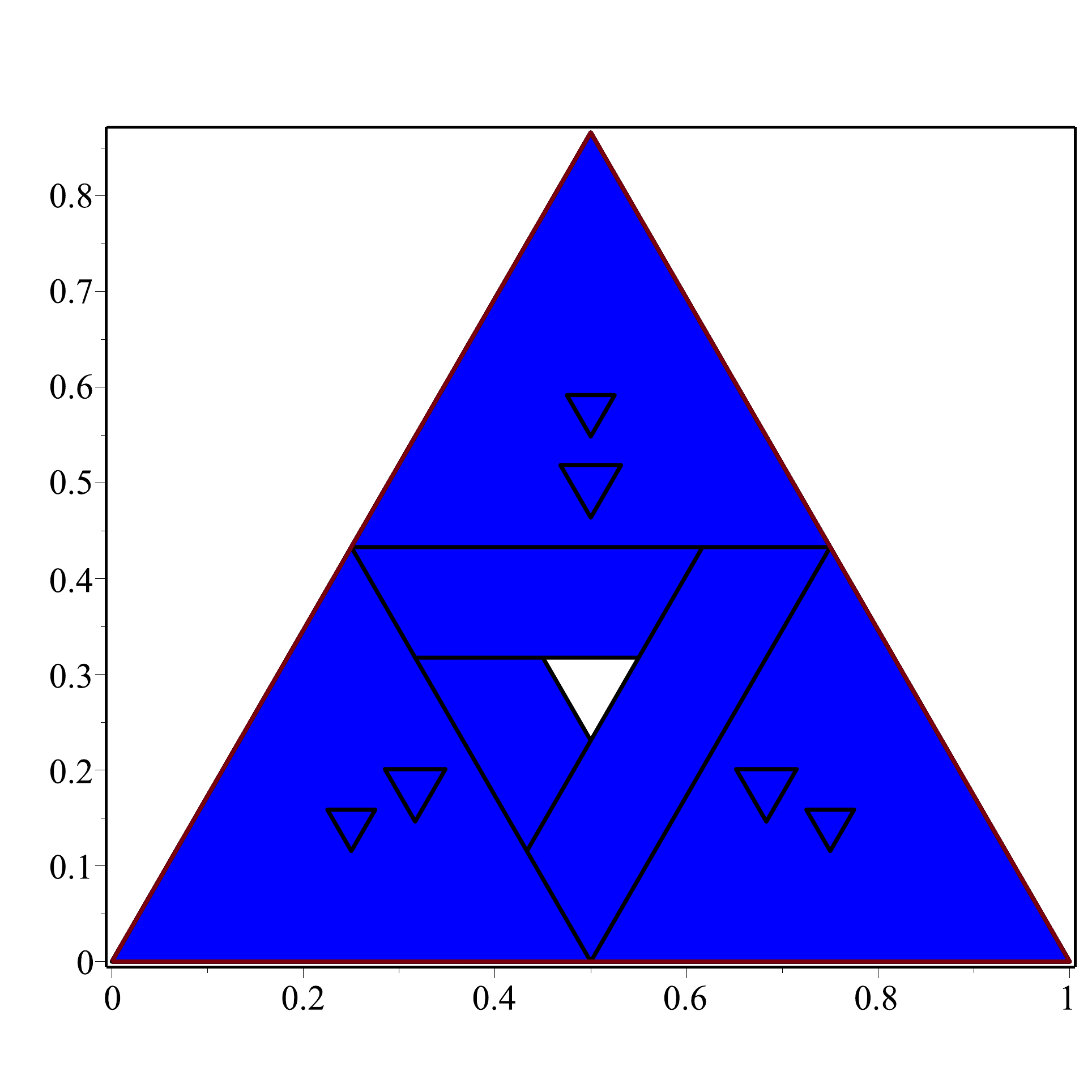}
\caption{The invariant set of the IFS and its image under the similarities}
\end{center}
\end{figure}

If we consider the IFS formed by just the last three maps, the invariant set is the classical Sierpinski gasket (Figure 2, right). On the other hand, the invariant set for the IFS formed by the first three maps is shown in Figure 2 (left).

\begin{figure}[ht]
\begin{center}
\includegraphics[angle=0,width=3.4cm,keepaspectratio=true]{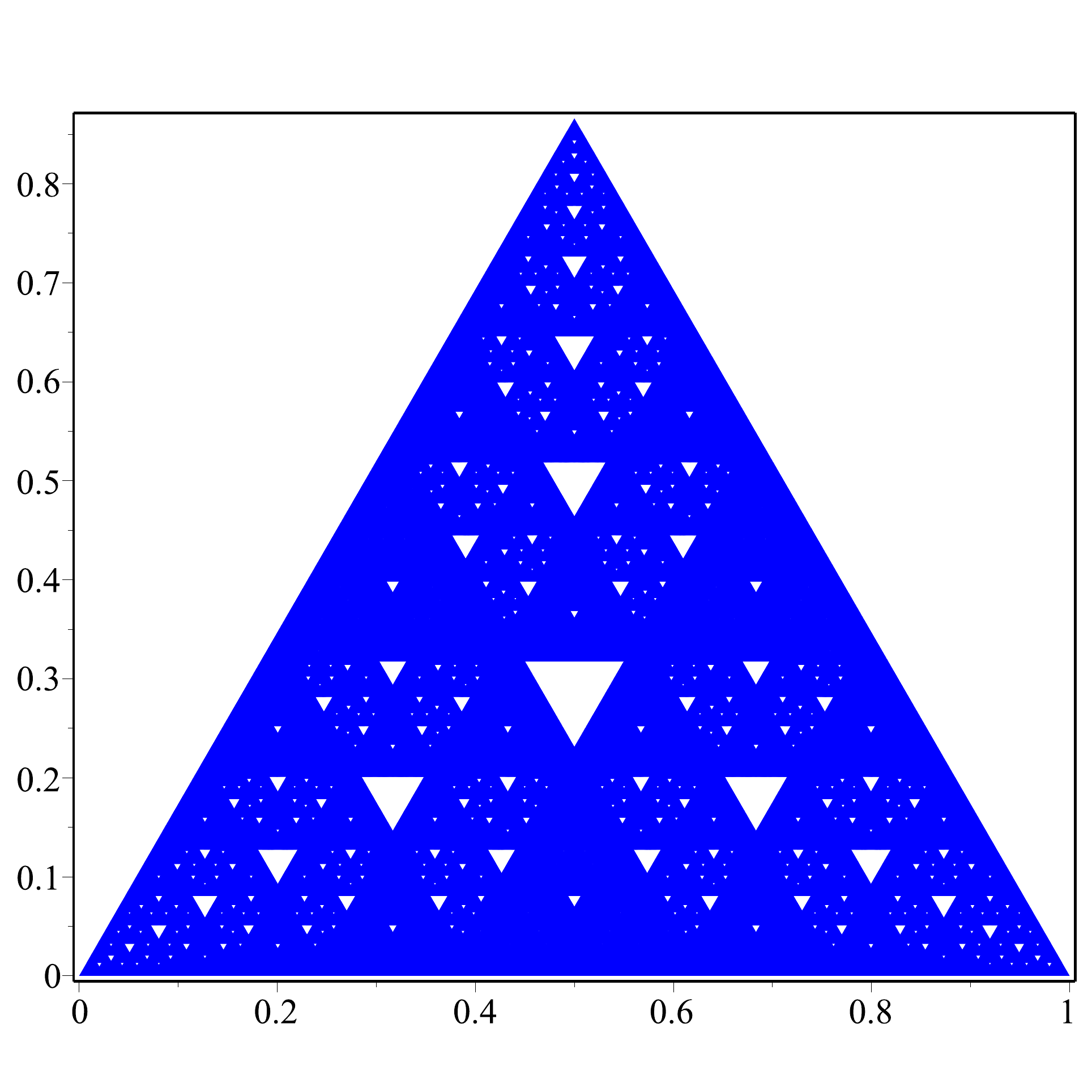}
\includegraphics[angle=0,width=3.4cm,keepaspectratio=true]{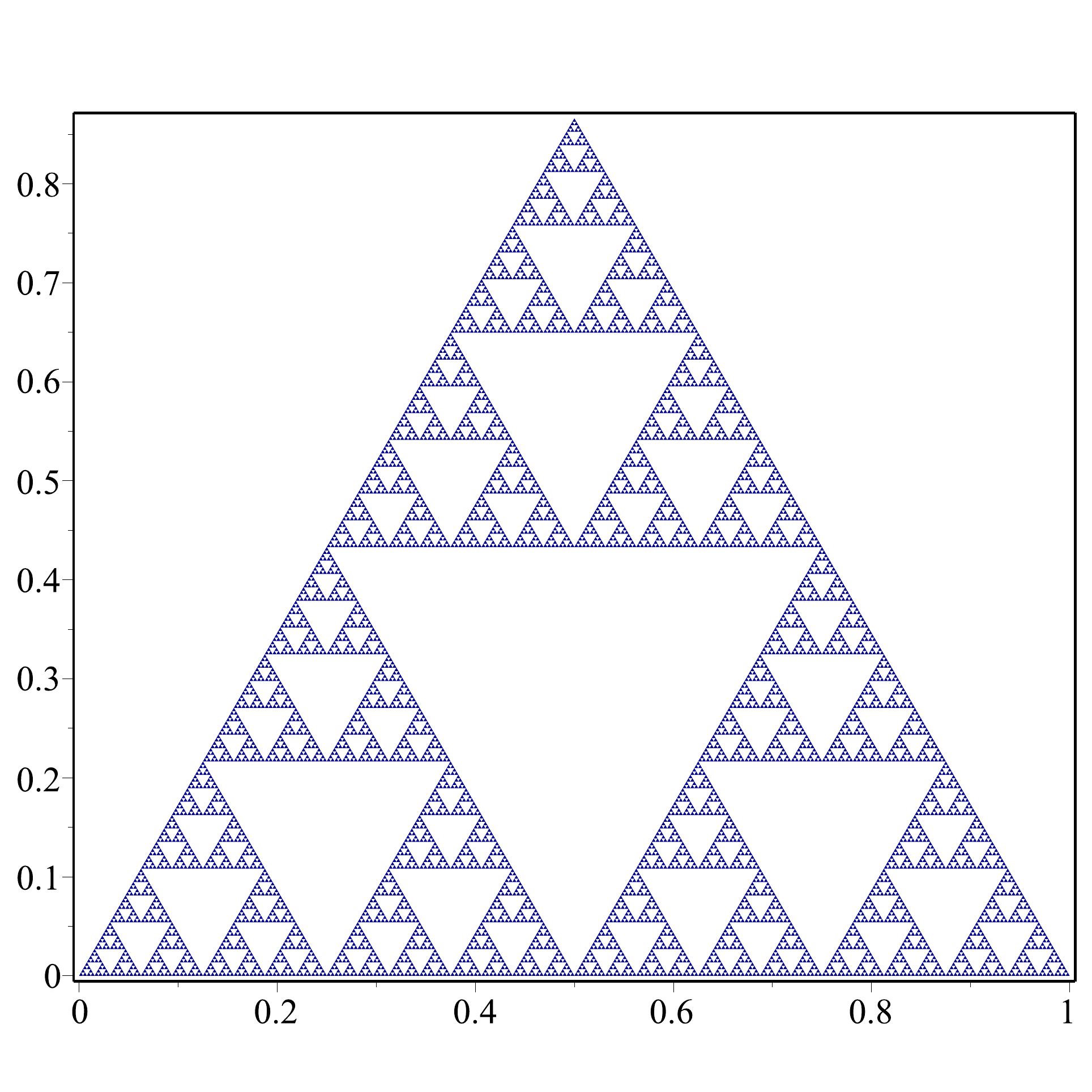}
\caption{The invariant set of the IFS formed by the first three maps (left) and the three last maps (right)}
\end{center}
\end{figure}
\end{example}

However, if we consider only continua with empty interior, all shapes are no longer possible. In fact, imposing this simple restriction, only two shapes are possible, exemplified by the Von Koch curve and the Sierpinski triangle.
These are two extreme cases among the shapes in the plane, corresponding to trivial shape and the shape of the Hawaiian earring, respectively.
The intermediate shapes of finite bouquets of circles are excluded.
In particular, there are not connected attractors of IFS of contractive similarities in the plane having empty interior and with the shape of the circle.
This is in sharp contrast with the situation for flows or homeomorphisms.
We have the following general result in this direction.

\begin{theorem}
Let $\mathcal{F}$ be an iterated function system of $\ \mathbb{R}^{2}$
consisting of contractive homeomorphisms and suppose that the attractor $K$ of $\mathcal{F}$ is a continuum with empty interior. Then $K$ has the shape of a point or the shape of the Hawaiian earring.
As a consequence, $\check{H} _{1}(K)$ (with coefficients in $\mathbb{Z}$) is either trivial or isomorphic to $\bigoplus_{n=1}^{\infty } \mathbb{Z}$.
\end{theorem}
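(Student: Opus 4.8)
The plan is to translate the statement, via Borsuk's classification theorem (Theorem 1.1), into a purely combinatorial question about the number of complementary components of $K$. Since $K$ is a plane continuum, Theorem 1.1 says its shape is that of a point, a finite wedge of circles, or the Hawaiian earring, according as $\mathbb{R}^{2}\setminus K$ has one, finitely many ($\geq 2$), or infinitely many components. Writing $m$ for the number of \emph{bounded} complementary components (the ``holes'' of $K$), the two allowed conclusions are $m=0$ and $m=\infty$, while the excluded intermediate shapes are exactly $1\leq m<\infty$. Thus it suffices to prove the dichotomy: \emph{if $K$ disconnects the plane (i.e.\ $m\geq 1$), then $m=\infty$.} The homological consequence is then immediate, since \v{C}ech homology is a shape invariant and takes the value $0$ on a point and $\bigoplus_{n=1}^{\infty}\mathbb{Z}$ on the Hawaiian earring.

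To exploit self-similarity, I would work with the iterated pieces. For a word $w=i_{1}\cdots i_{n}$ set $f_{w}=f_{i_{1}}\circ\cdots\circ f_{i_{n}}$, so that $K=\bigcup_{|w|=n}f_{w}(K)$ for every $n$. Each $f_{w}$ is a homeomorphism of $\mathbb{R}^{2}$ onto itself, hence carries complementary components of $K$ bijectively onto complementary components of $f_{w}(K)$; being Lipschitz, it sends bounded ones to bounded ones. In particular $f_{w}(K)$ is again a continuum with empty interior (as $\mathrm{int}\,f_{w}(K)=f_{w}(\mathrm{int}\,K)=\emptyset$) having exactly $m$ holes. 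Moreover, if $\lambda=\max_{i}\lambda_{i}<1$ is a common contraction ratio, then $f_{w}$ contracts distances by at most $\lambda^{n}$, so $\diam f_{w}(S)\leq\lambda^{n}\diam S$ for every set $S$; thus the pieces and all their holes shrink to $0$ as $n\to\infty$.

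The heart of the argument is a trapping lemma turning holes of the pieces into genuine holes of $K$. Let $H$ be a hole of $K$ and put $V=f_{w}(H)$, a bounded complementary component of $f_{w}(K)$ with $\partial V\subseteq f_{w}(K)\subseteq K$. Since $V$ is a nonempty open set and $K$ has empty interior, $V\setminus K\neq\emptyset$. I claim every component $C$ of the open set $V\setminus K$ is itself a component of $\mathbb{R}^{2}\setminus K$: the component $\tilde C$ of $\mathbb{R}^{2}\setminus K$ containing $C$ is disjoint from $K\supseteq\partial V$, so, being connected and meeting the open set $V$ without meeting $\partial V$, it stays inside $V$; hence $\tilde C\subseteq V\setminus K$, which forces $\tilde C=C$. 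Thus $C$ is a bounded complementary component of $K$, i.e.\ a hole of $K$, with $\diam C\leq\diam V=\diam f_{w}(H)\leq\lambda^{n}\diam H$.

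I would then finish by a shrinking argument. Suppose, for contradiction, that $1\leq m<\infty$ and fix one hole $H$. The finitely many holes of $K$ have a positive minimal diameter $\delta>0$. Choosing $n$ with $\lambda^{n}\diam H<\delta$ and any word $w$ of length $n$, the trapping lemma produces a hole $C$ of $K$ with $\diam C\leq\lambda^{n}\diam H<\delta$, contradicting the choice of $\delta$. Hence $m\geq 1$ forces $m=\infty$, establishing the dichotomy $m\in\{0,\infty\}$ and, by Theorem 1.1, that $Sh(K)$ is trivial or that of the Hawaiian earring. The main obstacle in this scheme is precisely the trapping lemma: a priori the small hole $f_{w}(H)$ of a subpiece could be ``filled in'' by the other pieces $f_{w'}(K)$, and it is exactly the empty-interior hypothesis (which guarantees $f_{w}(H)\setminus K\neq\emptyset$) together with the boundary containment $\partial f_{w}(H)\subseteq K$ that prevents this and pins a genuine small hole of $K$ inside each subpiece hole.
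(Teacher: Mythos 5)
Your proposal is correct, but it takes a genuinely different route from the paper's. Both proofs make the same Borsuk-theorem reduction (rule out $1\leq m<\infty$ bounded complementary components) and both are fueled by the same three ingredients --- a minimal-diameter hole, contractivity, and the pair of facts that $K$ has empty interior while $\partial f_w(H)=f_w(\partial H)\subset f_w(K)\subset K$ --- but the key mechanism differs. The paper applies a single contraction $h=h_1$ \emph{once} to a hole $A$ of minimal diameter $l$: it first shows (via a boundary-crossing argument with auxiliary disks $D_0\subset D_1$) that some $a\in A$ has $h(a)$ in a bounded component $B$, then constructs a topological disk $D'\subset B$ through $h(a)$ with $\diam D'>kl$ and a disk $D\subset A$ with $h(\partial D)$ missing $D'$, forcing $D'\subset h(D)$ and the metric contradiction $kl<\diam D'\leq \diam h(D)\leq kl$. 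You instead \emph{iterate}: your trapping lemma shows that the shrinking image-hole $V=f_w(H)$, with $\diam V\leq\lambda^{n}\diam H$, must contain a genuine hole of $K$ (empty interior gives $V\setminus K\neq\emptyset$, and $\partial V\subset K$ pins the complementary component inside $V$ by pure connectedness), so $K$ has holes of arbitrarily small diameter, contradicting finiteness. Your version is arguably cleaner: the single soft connectedness argument replaces the paper's two delicate constructions (the $D_0,D_1$ argument and the disks $D',D$, whose existence tacitly uses that the components $A$, $B$ are open, connected, simply connected regions that can be exhausted by disks), and it makes visible where the Hawaiian-earring structure comes from, since it plants a small hole of $K$ inside every image-hole of every piece. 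What the paper's argument buys is a one-shot contradiction with no iteration; but note that both proofs in fact establish the same stronger statement --- no single contractive ambient homeomorphism $h$ of $\mathbb{R}^2$ can satisfy $h(K)\subset K$ for such a $K$ --- since in your scheme one may take $w$ to range over the powers of one fixed map. Like the paper, you do use that the $f_i$ are homeomorphisms \emph{onto} $\mathbb{R}^2$ (needed for $\partial f_w(H)=f_w(\partial H)$ and for image-components of the complement), which is evidently how the hypothesis is intended. One caveat you inherit from the statement rather than from your argument: the first \v{C}ech homology of the Hawaiian earring is the inverse limit $\prod_{n=1}^{\infty}\mathbb{Z}$ (the projections $\mathbb{Z}^{n+1}\rightarrow\mathbb{Z}^{n}$ have limit the Baer--Specker group, which is uncountable), not $\bigoplus_{n=1}^{\infty}\mathbb{Z}$; the shape dichotomy, which is the substance of both proofs, is unaffected.
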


\begin{proof}
Suppose on the contrary that the continuum $K$ has the shape of a finite bouquet of circles and that there exists an IFS $\mathcal{F} =\{h_{1},\dots ,h_{r}\}$ consisting of contractive homeomorphisms of $\mathbb{R} ^{2}$ having $K$ as its limit. Then
$$K=h_{1}(K)\cup \dots \cup h_{r}(K).$$

We shall show that this impossible. As a matter of fact, we shall show that there is no contractive homeomorphism $h$ of $\mathbb{R}^{2}$ such that $h(K)\subset K$. We suppose provisionally the contrary and we will get a contradiction.

Let $A$ be a bounded component of $\mathbb{R}^{2}\setminus K$ (which exists because we are assuming that $K$ has not trivial shape). Then there exists $a\in A$ such that $h(a)\in B$, where $B$ is also a bounded component of $\mathbb{R} ^{2}\setminus K$. Otherwise $h(A)\subset C\cup K$ where $C$ is the unbounded component of $\mathbb{R}^{2}\setminus K$. Moreover, $h(A)$ is bounded since it is contained in the compact set $h(\bar{A})$.
Since $\mathring{K}$ is empty we have that $h(A)\cap C\neq \emptyset $.
Select a point $b\in h(A)\cap C$.
We know that $K$ has a system of neighborhoods in $\mathbb{R}^{2}$ consisting of (topological) disks with holes and, thus, there exists a disk $D_{0}$ containing $K$ in its interior with $b\notin D_{0}$.
Since $h(A)$ is bounded there exists a bigger disk $D_{1}$ that contains $h(A)$ and such that $D_{0}\subset \mathring{D}_{1}$.
It is easy to see that in this situation there are points in $\partial h(A)$ contained in $D_{1}\setminus \mathring{D}_{0}$ and, hence, these points are not in $K$. However $\partial h(A)=h(\partial
A)\subset h(K)\subset K$ and this is a contradiction.

Now select a bounded component $A$ of $\mathbb{R}^{2}\setminus K$ such that $l=\diam (\partial A)=\diam(\bar{A})$ is minimal among all the bounded
components. The component $A$ and the number $l$ are well defined because $\mathbb{R}^{2}\setminus K$ has a finite number of components by our hypothesis about
the shape of $K$. We have seen that there exists $a\in A$ such that $h(a)\in
B$ (also bounded). Let $m=\diam(\partial B)$ (hence $l\leq m$). If $k<1$ is
the contractivity factor of $h$ we have that $kl<m$. There exists a disc $D'\subset B$ such that $h(a)\in D'$ and $\diam D'>kl$. Moreover $h(\partial A)\subset K\subset \mathbb{R}^{2}\setminus D'$. Select a disk $D\subset A$ containing $a$ and such that $\partial D$ is
near $\partial A$ so that $h(\partial D)\subset \mathbb{R}^{2}\setminus D'$.
We have then the following situation: 1) $h(a)\in D'$ and $h(a)\in
h(D)$ (which is also a disk), 2) $h(\partial D)\subset \mathbb{R}^{2}\setminus D'$. Hence $D'\subset h(D)$ and $\diam D'\leq \diam(h(D))$. However $\diam D'>kl$ and $\diam h(D)\leq kl$. This
contradiction establishes our result.
\end{proof}

\begin{corollary}
Let $\mathcal{F}$ be an iterated function system of $\mathbb{R}^2 $ consisting of contractive homeomorphisms and suppose that the attractor $K$ of $\mathcal{F}$ is a continuum with Hausdorff dimension less than 2. Then $K$ has the shape of a point or the shape of the Hawaiian earring.
\end{corollary}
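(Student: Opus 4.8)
The plan is to deduce this corollary directly from Theorem 4.1 by observing that the Hausdorff-dimension hypothesis forces $K$ to have empty interior, after which Theorem 4.1 applies verbatim. Thus the only substantive content is the implication
$$\dim_H K < 2 \ \Longrightarrow\ \mathring{K}=\emptyset,$$
which I would establish for an arbitrary subset $K\subset\mathbb{R}^2$.

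First I would argue by contraposition. Suppose $\mathring{K}\neq\emptyset$; then $K$ contains some nonempty open ball $B\subset\mathbb{R}^2$. Since Hausdorff dimension is monotone under inclusion, it suffices to recall that every nonempty open subset of $\mathbb{R}^2$ has Hausdorff dimension exactly $2$. This is elementary: the $2$-dimensional Hausdorff measure $\mathcal{H}^2$ on $\mathbb{R}^2$ coincides, up to a positive multiplicative constant, with Lebesgue measure, so $B$ has positive (and finite) $\mathcal{H}^2$-measure, whence $\dim_H B = 2$. Monotonicity then gives $\dim_H K\geq \dim_H B = 2$, contradicting $\dim_H K<2$. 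Hence $K$ must have empty interior. (Alternatively, one may invoke the inequality $\dim_{\mathrm{top}}\leq \dim_H$ together with the fact that a planar set with nonempty interior has topological dimension $2$, but the measure-theoretic argument is cleaner and entirely self-contained.)

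Once empty interior is in hand, $K$ is a continuum in $\mathbb{R}^2$ with $\mathring{K}=\emptyset$ arising as the attractor of an IFS of contractive homeomorphisms, so Theorem 4.1 immediately yields that $K$ has the shape of a point or the shape of the Hawaiian earring. I expect no genuine obstacle here: the argument rests only on the measure-theoretic fact that open planar sets carry positive $\mathcal{H}^2$-measure, and everything else is a direct appeal to the already-proved Theorem 4.1. The corollary should therefore be viewed less as a new result than as a convenient reformulation of Theorem 4.1 in terms of the fractal-geometric invariant (Hausdorff dimension) most natural in the IFS setting.
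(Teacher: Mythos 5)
Your proposal is correct and coincides with the paper's (implicitly omitted) argument: the corollary is a direct consequence of Theorem 4.1, with the only content being that $\dim_H K<2$ forces $\mathring{K}=\emptyset$, since any nonempty open planar set has positive $\mathcal{H}^2$-measure and hence Hausdorff dimension $2$. Your measure-theoretic justification of this step is exactly the standard route, so there is nothing to add.
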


We finish with a few remarks concerning Conley attractors for IFS, a theory
recently developed by Barnsley and Vince \cite{bav}.
The following remarks are presented without much detail and they are intended to suggest a possible line for future research.
Barnsley and Vince considered invertible systems $\mathcal{F}$ consisting of homeomorphisms of a compact metric space $X$ with no contractivity requirements.
A compact set $A\subset X$ is said to be a Conley attractor of $\mathcal{F}$ if there is an open set $U$ such that $A\subset U$ and $A=\lim_{k\rightarrow \infty } \mathcal{F}^{k}(\bar{U})$, where the limit is taken in the Hausdorff metric. They prove that such attractors are characterized by the existence of arbitrarily small attractor blocks, i.e. neighborhoods $Q$ of $A$ such that $\mathcal{F}(\bar{Q})\subset \mathring{Q}$ and
\[ A=\bigcap _{k\rightarrow \infty }\mathcal{F}^{k}(\bar{Q}). \]
An important ingredient in the Conley theory of flows is the notion of
continuation of an isolated invariant set. By using Barnsley and Vince
theorem on the existence of arbitrarily small attractor blocks it is
possible to give a meaning to the notion of continuation of a Conley
attractor for IFS. The following result can be easily proved.

\begin{theorem}
Let $\mathcal{F}_{\lambda }$, with $\lambda \in [0,1]$, be a
parametrized family (depending continuously on $\lambda $) of invertible IFS
consisting of homeomorphisms of a compact metric space $X$ and let $K_{0}$
be a Conley attractor for $\mathcal{F}_{0}$. Then there is a $\lambda _{0}$
such that for every $\lambda \leq \lambda _{0}$ there exists a Conley
attractor $K_{\lambda }$ of $\mathcal{F}_{\lambda }$ and $K_{\lambda
}\rightarrow K_{0}$ (i.e. all $K_{\lambda }$ are contained in an arbitrary
neighborhood of $K_{0}$ in $X$ for $\lambda $ sufficiently small).
\end{theorem}

The convergence is weaker than in the Hausdorff metric and the Conley
attractors $K_{\lambda }$ are uniquely determined, in the sense that they
are the only ones sharing the isolating block $Q$ with $K_{0}$. If we
require contractivity conditions in the basin of $K_{0}$ we can achieve
convergence in the Hausdorff metric. In this case we would get more general
versions of the "blowing in the wind" theorem \cite{bar}.
It would be interesting to study more general conditions implying convergence in the Hausdorff metric.

\begin{problem}
Study relations between the shape of the Conley attractor $K_{0}$ and the
shape of its continuations $K_{\lambda }$.
\end{problem}

\section*{Acknowledgements}

The authors are grateful to Jerzy Dydak for inspiring conversations. The first and third authors are supported by MINECO (MTM2015-63612-P).

\newpage
\centerline{\scshape H\'{e}ctor Barge}
\medskip
 {\footnotesize
 \centerline{E.T.S. Ingenieros inform\'{a}ticos}
   \centerline{Universidad Polit\'{e}cnica de Madrid}
   \centerline{28660 Madrid, Spain}
   \centerline{\email{h.barge@upm.es}}}

\medskip

\centerline{\scshape Antonio Giraldo}
\medskip
{\footnotesize
 \centerline{E.T.S. Ingenieros inform\'{a}ticos}
   \centerline{Universidad Polit\'{e}cnica de Madrid}
   \centerline{28660 Madrid, Spain}
   \centerline{\email{antonio.giraldo@upm.es}}}

\medskip

\centerline{\scshape Jos\'{e} M.R. Sanjurjo}
\medskip
{\footnotesize
 \centerline{Facultad de Ciencias Matem\'{a}ticas}
   \centerline{Universidad Complutense de Madrid}
   \centerline{28040 Madrid, Spain}
	\centerline{\email{jose\_sanjurjo@mat.ucm.es}}}

\end{document}